\newtheorem{thm}{Theorem}[section]
\newtheorem{prop}[thm]{Proposition}
\newtheorem{cor}[thm]{Corollary}
\theoremstyle{definition}
\newtheorem{exs}{Examples}[section]
\newtheorem{rmk}{Remark}[section]
\newtheorem{defn}{Definition}[section]
\numberwithin{equation}{section}
\newcommand{\C}{\mathbb{C}}
\renewcommand{\P}{\mathbb{P}}
\newcommand{\Q}{\mathbb{Q}}
\newcommand{\R}{\mathbb{R}}
\newcommand{\Z}{\mathbb{Z}}
\newcommand{\cC}{\mathcal{C}}
\newcommand{\cE}{\mathcal{E}}
\newcommand{\cO}{\mathcal{O}}
\newcommand{\cT}{\mathcal{T}}
\newcommand{\cS}{\mathcal{S}}
\newcommand{\gX}{\mathfrak{X}}
\newcommand{\gW}{\mathfrak{W}}
\newcommand{\gw}{\mathfrak{w}}
\DeclareMathOperator{\Pic}{Pic}
\DeclareMathOperator{\Hom}{Hom}
\DeclareMathOperator{\rk}{rk}
\DeclareMathOperator{\Spec}{Spec}
\DeclareMathOperator{\Coh}{Coh}
\DeclareMathOperator{\Todd}{Todd}
\DeclareMathOperator{\Sch}{Sch}
\DeclareMathOperator{\gr}{gr}
\DeclareMathOperator{\vol}{vol}
\DeclareMathOperator{\Knum}{K_\text{num}}
\DeclareMathOperator{\Amp}{Amp}
\DeclareMathOperator{\An}{An}
\DeclareMathOperator{\car}{char}
\DeclareMathOperator{\ch}{ch}
\newcommand{\leqp}{%
  \mathrel{\raisebox{-0.5ex}{$\scriptscriptstyle($}}%
  \leq
  \mathrel{\raisebox{-0.5ex}{$\scriptscriptstyle)$}}%
}
\begin{document}

\title[Slope-semistability]{Slope-semistability and moduli of coherent sheaves: a survey}
\author{Mihai Pavel,  Matei Toma}

\address{Institute of Mathematics of the Romanian Academy,
P.O. Box 1-764, 014700 Bucharest, Romania
}
\email{cpavel@imar.ro}

\address{ Universit\'e de Lorraine, CNRS, IECL, F-54000 Nancy, France
}
\email{Matei.Toma@univ-lorraine.fr}

\date{\today}
\keywords{semistable coherent sheaves, moduli spaces}
\subjclass[2020]{14D20, 32G13}
\maketitle
\begin{center}
  \emph{Dedicated to the memory of Lucian B\u adescu}
\end{center}
\begin{abstract}
We survey old and new results on the existence of moduli spaces of semistable coherent sheaves both in algebraic and in complex geometry.
\end{abstract}

\setcounter{tocdepth}{1}
\tableofcontents

\section{Introduction}

In algebraic geometry and complex geometry the classification of vector bundles is an important problem that has attracted considerable attention since the early sixties. One goal of the classification was the construction of nicely behaved moduli spaces of vector bundles. This had been achieved for line bundles, but it soon became apparent that for higher rank vector bundles a restrictive condition was necessary to obtain moduli spaces with good geometric properties. To this purpose Mumford introduced the concept of slope-semistability in \cite{mumford63} for vector bundles over algebraic curves. This was later extended to cover coherent sheaves over higher dimensional bases. The theory attracted even more interest through Donaldson's work in four-manifolds differential topology. In particular the so-called Kobayashi-Hitchin correspondence relates moduli spaces of stable vector bundles on complex projective surfaces to moduli spaces of anti-self-dual connections in gauge theory, thus providing a new perspective in the study of the subject in complex geometry.

In this paper we survey the existence and construction results of moduli spaces of semistable coherent sheaves in both algebraic and complex geometry, with a particular stress on functorial aspects. We do not delve into the vast domain studying the geometric properties and applications of these moduli spaces in enumerative geometry, classification of manifolds, hyperk\"ahler geometry, gauge theory, etc. For this there exists a rich literature, see for instance \cite{HuybrechtsLehn,mestrano16survey} and the references therein. 

\subsection*{Acknowlegements: } MP was partly supported by the PNRR grant CF 44/14.11.2022 \textit{Cohomological Hall algebras
of smooth surfaces and applications}.

\section{First properties of slope-semistable sheaves}

In this section we recall the notion of slope-semistability for coherent sheaves, and discuss its first important properties in the context of both algebraic and complex-analytic geometry. Our main references here are \cite[Chapter 1]{HuybrechtsLehn} and \cite[Chapter 5]{kobayashi2014differential}.\\

\textbf{Setup.} Throughout this paper we denote by $(X,\omega)$ a polarized $n$-dimensional space which will be either
\begin{enumerate}[(a)]
    \item[(AG)] a smooth projective variety over an algebraically closed field $k$ with an integral ample class $\omega \in \Amp^1(X)$, or
    \item[(CG)] a compact complex manifold endowed with a Hermitian metric whose K\"ahler form $\omega$ is such that $\partial\bar\partial(\omega^{n-1})=0$; such a metric is called a Gauduchon metric.
\end{enumerate}

An important special subcase of (CG) that we will frequently refer to in the sequel is when $(X,\omega)$ is
\begin{enumerate}[(a)]
    \item[(KG)] a compact K\"ahler manifold endowed with a K\"ahler class $\omega \in H^{1,1}(X,\R)$.
\end{enumerate}

Also we will denote by $\Coh(X)$ the category of coherent sheaves on $X$. It should be understood that the coherent sheaves we consider are algebraic or analytic depending on whether $(X,\omega)$ is in the case (AG) or (CG) respectively. 

For a torsion-free sheaf $E \in \Coh(X)$, we define the $\omega$-slope of $E$ by
\[
    \mu^\omega(E) \coloneqq \frac{c_1(E) \cdot \omega^{n-1}}{\rk(E)}.
\]
In the algebraic setting the intersection product $c_1(E) \cdot \omega^{n-1}$ is performed in the Chow ring $A^*(X)$. Otherwise, in the complex case,   $\omega^{n-1}$ defines a class in Aeppli cohomology which may be integrated against any class in $(1,1)$-Bott-Chern cohomology, in particular against $c_1(E)$ viewed as a class in  $H^{1,1}_{BC}(X,\R)$. When $(X,\omega)$ is K\"ahler, the intersection product $c_1(E) \cdot \omega^{n-1}$ is given by cup product in the cohomology ring $H^*(X,\R)$.

The following notion of slope-semistability was introduced by Mumford \cite{mumford63} over curves, and later extended in higher dimensions by Takemoto \cite{takemoto1972stable}. We also recall in Definition \ref{defn:GM} the Gieseker-Maruyama semistability following \cite{maruyama1976openness,gieseker77}.

\begin{defn}[Slope-semistability]
A sheaf $E \in \Coh(X)$ is \textit{$\omega$-semistable} (resp. \textit{$\omega$-stable}) if 
\begin{enumerate}
    \item $E$ is torsion-free, 
    \item for any subsheaf $F \subset E$ with $0 < \rk(F) < \rk(E)$ we have
    \[
        \mu^\omega(F) \le \mu^\omega(E) \quad (\text{resp. }<).
    \]
\end{enumerate}
\end{defn}

For the terminology, we will also say (semi)stable, for slope-(semi)stable or for $\omega$-(semi)stable when $\omega$ is clear from the context. A torsion-free sheaf $E$ is called \textit{polystable} if it is isomorphic to a direct sum of stable sheaves of the same slope. 

\begin{defn}\label{defn:simple}
A coherent sheaf $E$ on $X$ is called \textit{simple} if $\Hom(E,E) \cong k$. 
\end{defn}

It is immediately shown that stable sheaves are simple.

\begin{defn}[Gieseker-Maruyama-semistability]\label{defn:GM}
In the (AG) and (KG) setups, one defines the Hilbert polynomial of a coherent sheaf $E$ with respect to $\omega$ by setting $$P_\omega(E,m) = \int_X \ch(E)e^{m\omega}\Todd_X.$$
A sheaf $E \in \Coh(X)$ is said to be \textit{Gieseker-Maruyama (GM) semistable} (resp. \textit{Gieseker-Maruyama-stable}) if 
\begin{enumerate}
    \item $E$ is torsion-free, 
    \item for any subsheaf $F \subset E$ with $0 < \rk(F) < \rk(E)$ we have
    \[
        \frac{P_\omega(F,m)}{\rk(F)} \le \frac{P_\omega(E,m)}{\rk(E)} \quad (\text{resp. }<) \text{ for }m \gg 0.
    \]
\end{enumerate}
\end{defn}

\begin{rmk}\label{rmk:ImplicationsStability}
It is easy to see that we have the following implications for coherent sheaves in the (AG) or (KG) setups
\[
    \omega\text{-stable} \implies GM\text{-stable} \implies GM\text{-semistable} \implies \omega\text{-semistable}.
\]
\end{rmk}

\begin{exs}
\begin{enumerate}
    \item All torsion-free sheaves of rank one and in particular all line bundles are stable with respect to any polarization.
    \item A direct sum of semistable sheaves of the same slope is also semistable.
    \item The tangent bundle of the complex projective space $\P^n$ is stable. 
    \item The tangent bundle of a complex (algebraic or non-algebraic) K3 surface is stable with respect to any polarization. In positive characteristic the stability of the tangent bundle is currently unknown \cite[Ch. 9, Sect. 4]{huybrechtsK3}.
    \item All non-algebraic compact complex surfaces admit irreducible, hence stable, rank two vector bundles \cite{brinzanescuBook}. Recall that by definition a torsion-free sheaf is \textit{irreducible} if it admits no coherent subsheaf of intermediate rank.
\end{enumerate}
\end{exs}

The notion of slope-semistability fits within the broader context of algebraic stability conditions introduced by Rudakov \cite{rudakov1997stability}, and furthemore it satisfies the H\"arder-Narasimhan property. That is, for any torsion-free sheaf $E \in \Coh(X)$ there exists a unique \textit{H\"arder-Narasimhan (HN) filtration}
\[
    0 = E_0 \subset E_1 \subset \ldots \subset E_m = E
\]
such that the factors $E_i/E_{i-1}$ are $\omega$-semistable and
\[
    \mu^\omega(E_1) > \mu^\omega(E_2/E_1) > \ldots > \mu^\omega(E/E_{m-1}).
\]
We refer the reader to \cite[Section 1.3]{HuybrechtsLehn} for a proof in the algebraic case and to \cite[Theorem 5.7.15]{kobayashi2014differential} in the analytic case.


In general one can ``approximate'' any semistable sheaf $E \in \Coh(X)$ by stable sheaves using \textit{Seshadri filtrations}
\[
   E^\bullet: \quad 0 = E_0 \subset E_1 \subset \ldots \subset E_m = E
\]
with stable factors $E_i/E_{i-1}$ of slope $\mu^\omega(E_i/E_{i-1})=\mu^\omega(E)$. We shall denote by $\gr^{S}(E^\bullet) = \bigoplus_i E_i/E_{i-1}$ the corresponding graded sheaf of such a filtration.

\begin{rmk}
We note that a semistable sheaf $E$ might admit many Seshadri filtrations, however one can show that the graded module corresponding to any such filtration is uniquely determined in codimension one. In other words, the reflexive hull $\gr^{S}(E^\bullet)^{\vee \vee}$ of the graded sheaf does not depend on the choice of the Seshadri filtration \cite[Section 1]{HuybrechtsLehn}. The graded modules of different Seshadri filtrations may however be distinct, see \cite[Example 3.1]{BTT2017}.
\end{rmk}

\section{Some key results on slope-semistable sheaves}

\subsection{Set-theoretical Kobayashi-Hitchin correspondence}

We present here the Kobayashi-Hitchin correspondence which establishes a link between the algebraic geometric concept of stability and the existence of Hermite-Einstein metrics in complex differential geometry. It allows the use of analytic and differential geometric methods in the study of semistable vector bundles and their moduli spaces in complex geometry. See \cite{lubke95kobayashi} for a thorough treatment of this subject.

This works in the complex geometrical setup of compact complex manifolds endowed with a Gauduchon metric. 
Let us fix a compact Gauduchon manifold $(X,\omega)$ of dimension $n$. Let $(E,h)$ be a $\mathscr{C}^\infty$ complex vector bundle on $X$, endowed with a Hermitian metric $h$. Then, by definition, an $h$-unitary connection $A$ on $(E,h)$ is called $\omega$-\textit{Hermite--Einstein} if $A$ is integrable and satisfies
\begin{align*}
  \Lambda_\omega F_A  = - \frac{2\pi i}{(n-1)!\vol_\omega(X)} \mu^\omega(E) \text{Id}_E.
\end{align*}
Here $\Lambda_\omega$ is the adjoint of the Lefschetz operator on forms given by wedging with $\omega$.
Moreover, $A$ is called \textit{irreducible} if it has no decomposition $A = A_1 \oplus A_2$ coming from an orthogonal splitting $E = E_1 \oplus E_2$ of the Hermitian smooth vector bundle $(E,h)$. Note that the integrability condition on $A$ endows $E$ with a holomorphic structure $\cE_A = (E,\overline{\partial}_A)$ by the Newlander-Nirenberg theorem. 

\begin{thm}\label{thm:KH}
Let $(E,h_0)$ be a Hermitian complex vector bundle on the Gauduchon manifold $(X,\omega)$. If there exists an irreducible $\omega$-Hermite-Einstein connection on $(E,h_0)$, then the induced holomorphic structure $\cE_A$ on $E$ is $\omega$-stable. Conversely, if $\cE$ is an $\omega$-stable holomorphic structure on $E$, then there exists a Hermitian metric $h$ on $E$ such that its Chern connection with respect to $\cE$ is irreducible $\omega$-Hermite-Einstein on $(E,h)$. This metric $h$ is called $\omega$-Hermite-Einstein on $\cE$ and is unique up to multiplication by a positive factor. 
\end{thm}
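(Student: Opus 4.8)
The statement splits into two implications of quite different character, and the plan is to treat them separately. Throughout I would set $\lambda \coloneqq -\tfrac{2\pi i}{(n-1)!\vol_\omega(X)}\mu^\omega(E)$, so that the Hermite--Einstein condition becomes $\Lambda_\omega F_A = \lambda\,\mathrm{Id}_E$.

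For the \emph{easy direction} (existence of an irreducible Hermite--Einstein connection $\Rightarrow$ stability), I would take a coherent subsheaf $F \subset \cE_A$ with $0 < \rk(F) < \rk(E)$ and first invoke the Uhlenbeck--Yau fact that $F$ is represented almost everywhere by a \emph{weakly holomorphic subbundle}: an $L^2_1$ section $\pi$ of $\End(E)$ with $\pi = \pi^* = \pi^2$ and $(\mathrm{Id}_E - \pi)\overline{\partial}_A \pi = 0$. The key ingredient is then the Chern--Weil integral formula
\[
  \deg_\omega(F) = \frac{1}{2\pi}\int_X \operatorname{tr}\!\big(\pi\, i\Lambda_\omega F_A\big)\frac{\omega^n}{n!} \;-\; \frac{1}{2\pi}\big\|\overline{\partial}_A \pi\big\|^2_{L^2},
\]
whose validity rests precisely on the Gauduchon condition $\partial\overline{\partial}(\omega^{n-1})=0$, which makes the degree well defined and the integration by parts legitimate. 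Substituting the Hermite--Einstein equation gives $\mu^\omega(F) \le \mu^\omega(E)$, with equality forcing $\overline{\partial}_A \pi = 0$; in that case $\pi$ would be a parallel holomorphic orthogonal projection splitting the connection, contradicting irreducibility. Hence the inequality is strict and $\cE_A$ is $\omega$-stable.

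For the \emph{hard direction} (stability $\Rightarrow$ existence of a Hermite--Einstein metric), which is the Donaldson--Uhlenbeck--Yau theorem in the Gauduchon form of Li--Yau and L\"ubke--Teleman, I would run the continuity method. Fixing $h_0$ and writing $h = h_0 e^{s}$ for a self-adjoint endomorphism $s$, one solves for each $\varepsilon \in (0,1]$ the perturbed equation $i\Lambda_\omega F_{h} - i\lambda\,\mathrm{Id}_E = -\varepsilon\, s$; solvability at fixed $\varepsilon$ follows from ellipticity together with a maximum-principle bound. The decisive step is a uniform $C^0$ estimate on $s_\varepsilon$ as $\varepsilon \to 0$: if $\sup_X |s_\varepsilon|$ stays bounded then elliptic regularity yields a limiting solution of the genuine Hermite--Einstein equation, while if $\sup_X |s_\varepsilon| \to \infty$ one rescales and extracts a weak $L^2_1$ limit which, by the Uhlenbeck--Yau regularity theorem, is a weakly holomorphic projection cutting out a coherent \emph{destabilizing} subsheaf with $\mu^\omega(F) \ge \mu^\omega(E)$, contradicting stability. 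Uniqueness up to a positive scalar I would obtain from a convexity argument: two Hermite--Einstein metrics differ by a self-adjoint parallel endomorphism, which simplicity of the stable bundle forces to be a positive multiple of $\mathrm{Id}_E$; the same simplicity yields irreducibility of the Chern connection.

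I expect the main obstacle to be the $C^0$ estimate and the accompanying Uhlenbeck--Yau regularity theorem in the hard direction: turning the analytic blow-up of $s_\varepsilon$ into a genuine coherent subsheaf with a well-defined slope is the technical heart of the argument, and it is exactly here that the Gauduchon hypothesis re-enters to ensure that the limiting projection can be weighed against the stability inequality.
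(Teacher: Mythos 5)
Your sketch is correct in outline, but note that the paper, being a survey, does not actually prove Theorem \ref{thm:KH}: it states the correspondence and refers the reader to L\"ubke--Teleman \cite{lubke95kobayashi} for the proof. The argument you describe --- the Chern--Weil degree formula for weakly holomorphic subbundles (legitimized by the Gauduchon condition) in the easy direction, the Uhlenbeck--Yau continuity method with the uniform $C^0$ estimate and regularity theorem in the hard direction, and simplicity of stable bundles for uniqueness and irreducibility --- is precisely the standard proof given in that cited reference, so your approach coincides with the one the paper relies on.
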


\begin{rmk}
In the setup of Theorem \ref{thm:KH}, if $\cE$ is an $\omega$-stable holomorphic structure on $E$, then there exists an $\omega$-Hermite-Einstein connection $A$ on $(E,h_0)$ such that the induced holomorphic structure $\cE_A$ on $E$ is isomorphic to $\cE$, see \cite{lubke95kobayashi}.   
\end{rmk}

\subsection{Bogomolov inequality}
Over a smooth projective variety or compact K\"ahler manifold, the Bogomolov inequality expresses a strong topological constraint to which semistable torsion-free sheaves are subject. It is particularly useful in boundedness questions, see Theorem \ref{thm:BoundednessAG}.

We will state the Bogomolov inequality in the framework of algebraic and K\"ahler geometry, and then make a remark on its formulation in the Gauduchon setup.

\begin{thm}[Bogomolov inequality] \label{thm:Bogomolov}
In the zero characteristic (AG) setup and in the (KG) setup, for any $\omega$-semistable sheaf $E$ on $X$ one has
$$\Delta(E)\cdot \omega^{n-2} \ge 0,$$
where $$\Delta(E) := 2 \rk(E) c_2(E) - (\rk(E) - 1)c_1(E)^2.$$
In the (AG) setup in characteristic $p > 0$ and for $E$ $\omega$-semistable, we have
$$\Delta(E)\cdot \omega^{n-2} + \frac{\rk(E)^2(\rk(E)-1)^2}{(p-1)^2} \omega^n \ge 0.$$
\end{thm}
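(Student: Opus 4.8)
The plan is to handle the characteristic-zero/Kähler case and the positive-characteristic case by entirely different methods, after a set of reductions valid in every characteristic. First I would reduce to the case of a stable sheaf. Given a Seshadri filtration $0=E_0\subset\dots\subset E_m=E$ with stable quotients $\gr_i=E_i/E_{i-1}$, all of slope $\mu^\omega(E)$, the additivity of $c_1$ and $\ch_2$ in short exact sequences yields, for any $0\to E'\to E\to E''\to0$ with $\mu^\omega(E')=\mu^\omega(E'')$ and $r=\rk(E)$, $r'=\rk(E')$, $r''=\rk(E'')$, the identity
\[
\frac{\Delta(E)\cdot\omega^{n-2}}{r}=\frac{\Delta(E')\cdot\omega^{n-2}}{r'}+\frac{\Delta(E'')\cdot\omega^{n-2}}{r''}-\frac{r'r''}{r}\,\xi^2\cdot\omega^{n-2},
\]
where $\xi=c_1(E')/r'-c_1(E'')/r''$ satisfies $\xi\cdot\omega^{n-1}=0$. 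By the Hodge index theorem the last term is $\ge0$, so it suffices to prove $\Delta(\gr_i)\cdot\omega^{n-2}\ge0$ for the stable factors, and iterating handles the whole filtration. Passing from a stable sheaf to its reflexive hull can only decrease $\Delta\cdot\omega^{n-2}$ (the cokernel is supported in codimension $\ge2$ and contributes a nonnegative multiple of $\ch_2$), so I may further assume $E$ is stable and reflexive, hence locally free off a set of codimension $\ge3$.

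In the (KG) setup and the characteristic-zero (AG) setup (where, $X$ being projective hence Kähler, I pass to the analytic category, stability being preserved by GAGA) I would invoke the Kobayashi--Hitchin correspondence, Theorem \ref{thm:KH}: a stable bundle $\cE$ carries an $\omega$-Hermite--Einstein metric, whose curvature $F$ satisfies $\Lambda_\omega F=\lambda\,\mathrm{Id}$. Writing $F=F_0+\tfrac1r(\operatorname{tr}F)\mathrm{Id}$ with $F_0$ trace-free, the Hermite--Einstein condition forces $\Lambda_\omega F_0=0$, i.e. $F_0$ is primitive. A Chern--Weil computation then identifies $\Delta(\cE)$ with the representative $\tfrac{r}{4\pi^2}\operatorname{tr}(F_0\wedge F_0)$, whence $\Delta(\cE)\cdot\omega^{n-2}$ equals a nonnegative multiple of $\int_X|F_0|^2\,\omega^n\ge0$ by the second Hodge--Riemann relation for primitive $(1,1)$-forms. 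For the stable reflexive but non-locally-free case one runs the identical computation with an admissible Hermite--Einstein metric in the sense of Bando--Siu; the curvature integrals still converge and the inequality persists.

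In positive characteristic the analytic argument is unavailable, and I would instead use the Frobenius $F\colon X\to X$, following Langer. After reducing to a surface by a restriction theorem, the two ingredients are: (i) a \emph{strongly} semistable sheaf — one all of whose Frobenius pullbacks remain semistable — satisfies the sharp inequality $\Delta\cdot\omega^{n-2}\ge0$; this rests on the stability of tensor products of strongly semistable sheaves, so that a negative discriminant would, through Riemann--Roch, produce a destabilizing subsheaf of $\End(E)$, a contradiction; and (ii) a bound on how far semistability can fail under Frobenius. Since $F^*$ multiplies $c_1$ by $p$ and $\ch_2$ by $p^2$, one has $\Delta(F^{k*}E)=p^{2k}\Delta(E)$, while the instability $\delta_k=\mu_{\max}(F^{k*}E)-\mu_{\min}(F^{k*}E)$ obeys a recursion of the form $\delta_{k+1}\le p\,\delta_k+c_0$, giving $\delta_k\le c_0\,(p^k-1)/(p-1)$. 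Feeding these slope gaps back into the extension formula of the first paragraph — now with factors of differing slopes, so that the Hodge-index correction contributes a definite term proportional to the squared slope differences — rescaling $\Delta(F^{k*}E)\cdot\omega^{n-2}=p^{2k}\Delta(E)\cdot\omega^{n-2}$ by $p^{-2k}$ and letting $k\to\infty$ (where the Harder--Narasimhan filtration of $F^{k*}E$ has stabilized into strongly semistable factors, to which (i) applies) produces the correction term, in which the factor $(p-1)^{-2}$ is exactly the square of the geometric series $\sum_{k\ge1}p^{-k}$.

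The main obstacle is the positive-characteristic estimate (ii): controlling the instability created by Frobenius and extracting the precise constant $\tfrac{\rk(E)^2(\rk(E)-1)^2}{(p-1)^2}$ requires Langer's careful analysis of the Harder--Narasimhan filtrations of Frobenius pullbacks, their stabilization for large $k$, and the bookkeeping of the quadratic slope-difference terms. This is considerably more delicate than either the Hodge-index reduction or the Chern--Weil identity, each of which is a formal computation once the Kobayashi--Hitchin metric is in hand.
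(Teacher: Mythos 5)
Your proposal is sound in overall structure, and for two of the three cases it takes exactly the route this survey points to: in the (KG) setup, L\"ubke's Chern--Weil argument for Hermite--Einstein connections combined with the Kobayashi--Hitchin correspondence (Theorem \ref{thm:KH}), and in positive characteristic, Langer's Frobenius-pullback method from \cite{langer2004semistable}. Your preliminary reductions are genuine added value rather than a deviation: the survey deduces the K\"ahler case from Theorem \ref{thm:KH}, which as stated applies only to \emph{locally free} stable sheaves, so the passage from an arbitrary semistable sheaf to a stable reflexive one --- the Seshadri-filtration/Hodge-index identity (with $\xi\cdot\omega^{n-1}=0$ forcing $\xi^2\cdot\omega^{n-2}\le 0$, so the correction term has the right sign), the comparison $\Delta(E)\cdot\omega^{n-2}\ge\Delta(E^{\vee\vee})\cdot\omega^{n-2}$, and Bando--Siu admissible Hermite--Einstein metrics on stable reflexive sheaves --- is precisely what must be supplied to turn the survey's remark into a proof, and your account of it is correct. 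The characteristic-$p$ paragraph is, as you acknowledge, a sketch of Langer's argument; since the survey itself only cites Langer for this case, that is on par with the paper.

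The one genuine gap is in your treatment of the characteristic-zero (AG) case, where you depart from the paper's route. The survey handles it algebraically: Bogomolov's theorem on surfaces plus the Mehta--Ramanathan restriction theorem (Theorem \ref{thm:MRrestrictions}), which reduces $\Delta(E)\cdot\omega^{n-2}\ge 0$ to the surface case by restricting to a surface cut out by $n-2$ general divisors in $|aH|$. You instead write ``$X$ being projective hence K\"ahler, I pass to the analytic category'' --- but the (AG) setup is over an \emph{arbitrary} algebraically closed field $k$ of characteristic zero, and a projective variety over, say, $\overline{\Q}$ or a field of uncountable transcendence degree is not a K\"ahler manifold; analytification is meaningless there, so this step fails as written. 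It can be repaired by the Lefschetz principle: $X$, $\omega$ and $E$ are defined over a finitely generated subfield of $k$, which embeds into $\C$, and slope-semistability as well as the number $\Delta(E)\cdot\omega^{n-2}$ are invariant under extension of algebraically closed base fields (the Harder--Narasimhan filtration is compatible with such base change). Alternatively, and more in the spirit of the survey, one uses Mehta--Ramanathan to reduce to surfaces and quotes Bogomolov's algebraic surface theorem, which avoids both the transcendental machinery and the base-field issue; your analytic route buys a uniform treatment of (AG) and (KG), but only once one of these patches is made explicit.
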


\begin{rmk}
In the algebraic case, the theorem was first proved by Bogomolov \cite{Bogomolov} over algebraic surfaces in zero characteristic. The general algebraic case in zero characteristic follows from his result and the Mehta-Ramanathan restriction theorem \ref{thm:MRrestrictions}. The positive characteristic case was proved by Langer \cite{langer2004semistable}.
\end{rmk}

\begin{rmk}
    The inequality $\Delta(E)\cdot \omega^{n-2} \ge 0$ was proved in the K\"ahler case by L\"ubke for holomorphic vector bundles $E$ admitting an $\omega$-Hermite-Einstein metric, see \cite{kobayashi2014differential,lubke95kobayashi}. Together with the Kobayashi-Hitchin correspondence it yields the statement of Theorem \ref{thm:Bogomolov} in the K\"ahler case. In fact, L\"ubke's proof also applies in the context of a Gauduchon manifold $(X,\omega)$, leading to a pointwise inequality of $(n,n)$-forms:
    \[
        \Delta(E,h)\wedge \omega^{n-2} \ge 0,
    \]
    where $h$ is an $\omega$-Hermite-Einstein metric on $E$ and the $(2,2)$-form $\Delta(E,h)$ is computed using the associated Chern connection. 
\end{rmk}

\subsection{Restriction theorems}

In this subsection we place ourselves in the algebraic setting and present restriction results for (semi)stable sheaves. These are used in moduli theory to prove boundedness and general properties of moduli spaces of sheaves. 

Let $H$ be an ample divisor representing the polarization $\omega$. We assume the dimension $n$ of $X$ to be larger than one. Let $E \in \Coh(X)$ be a torsion-free sheaf. If $D \in |aH|$ is a smooth divisor for some $a > 0$ such that $E|_D$ is (semi)stable with respect to $H|_D$, then it is immediate to see that $E$ is also (semi)stable with respect to $H$. One may wonder if a converse statement holds. The example of the tangent bundle $\cT_{\P_\C^n}$ shows that some caution is required. Indeed, its restriction to any hyperplane $D$ is isomorphic to $\cO_{\P_\C^{n-1}}(1) \oplus \cT_{\P_\C^{n-1}}$, which is not semistable. However, a positive answer is found if one takes a general divisor $D \in |aH|$ for $a$ sufficiently large. This is the content of the Mehta-Ramanathan restriction theorem \cite{mehta1982semistable,mehta1984restriction}:

\begin{thm}\label{thm:MRrestrictions}
If $E$ is a $H$-(semi)stable sheaf on $X$, then its restriction $E|_D$ to a general divisor $D \in |aH|$ of sufficiently large degree is $(H|_D)$-(semi)stable.   
\end{thm}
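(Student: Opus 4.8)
The plan is to argue by contradiction and to trade the failure of (semi)stability on the general divisor for a destabilizing subsheaf of $E$ on $X$ itself, using a slope identity that makes such a subsheaf impossible. Fix a large $a$ and let $\mathcal{D} \subset X \times |aH|$ be the universal divisor, with projections $p \colon \mathcal{D} \to X$ and $q \colon \mathcal{D} \to |aH|$; by Bertini the general fibre $D = D_s$ is smooth and irreducible. Assume that $E|_D$ fails to be $(H|_D)$-semistable for general $s$. Since the Harder--Narasimhan type is constant on a dense open subset $U \subset |aH|$, I would form over $U$ the relative maximal destabilizing subsheaf $\mathcal{F} \subset p^*E|_{\mathcal{D}_U}$, flat over $U$, whose fibre $\mathcal{F}_s \subset E|_{D_s}$ is the first step of the Harder--Narasimhan filtration of $E|_{D_s}$ and satisfies $\mu^{H|_D}(\mathcal{F}_s) > \mu^{H|_D}(E|_{D_s})$.

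The reason for descending to the divisor is the following elementary identity. If a saturated subsheaf $F \subset E$ restricts on a general $D \in |aH|$ to $\mathcal{F}_s$, then, since $D \sim aH$, intersecting a restricted class with $(H|_D)^{n-2}$ on $D$ amounts to intersecting with $aH^{n-1}$ on $X$, whence
\[
    \mu^{H|_D}(F|_D) = a\,\mu^{H}(F), \qquad \mu^{H|_D}(E|_D) = a\,\mu^{H}(E).
\]
A strict inequality $\mu^{H|_D}(\mathcal{F}_s) > \mu^{H|_D}(E|_D)$ would therefore force $\mu^{H}(F) > \mu^{H}(E)$, contradicting the $H$-semistability of $E$. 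The entire problem is thus reduced to showing that, for $a$ large enough, the relative destabilizing subsheaf $\mathcal{F}$ is the restriction to the general divisor of a genuine subsheaf of $E$ on $X$.

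This descent is the technical heart of the Mehta--Ramanathan argument and the step I expect to be hardest. I would realize $\mathcal{F}$ as a rational section $\sigma \colon |aH| \dashrightarrow \Quot(p^*E|_{\mathcal{D}}, P)$ of the relative Quot scheme attached to the Hilbert polynomial $P$ of the destabilizing quotient, and use the properness of $\Quot$ to control the indeterminacy of $\sigma$. The essential quantitative input is a boundedness statement: the numerical invariants (rank, and slope normalized by $a$) of the maximal destabilizing subsheaves $\mathcal{F}_s$ lie in a fixed finite set as $a$ grows, so that one may fix the rank $r'$ and compare destabilizing data across members of the linear system. Organizing the comparison along a general pencil $\{D_t\}$ with complete-intersection base locus, and invoking the uniqueness of the maximal destabilizing subsheaf to match the restrictions over lower-dimensional intersections, one extends $\mathcal{F}$ to a subsheaf of $E$ off a closed subset of codimension $\ge 2$; its saturation is the desired $F \subset E$. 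It is precisely in forcing the destabilizing subsheaf to have bounded defect --- hence to originate from $X$ rather than from the restriction process --- that the hypothesis of sufficiently large degree is consumed, and this is where the most care is needed. The dimension can be brought in as an induction on $n$, the case $n = 2$ being restriction to curves, handled in \cite{mehta1982semistable}.

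For the stable case one runs the same scheme but must additionally rule out that $E|_D$ is strictly semistable for general $D$ while $E$ is stable. Here I would replace the Harder--Narasimhan filtration by a relative Seshadri filtration and descend a subsheaf of the same slope to $X$, contradicting the strict slope inequality that defines stability; the slope identity above again converts an equality $\mu^{H|_D}(\mathcal{F}_s) = \mu^{H|_D}(E|_D)$ into $\mu^{H}(F) = \mu^{H}(E)$ for a proper subsheaf $F \subsetneq E$.
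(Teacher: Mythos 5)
The paper itself gives no proof of this theorem: it is a survey, and Theorem \ref{thm:MRrestrictions} is stated with a reference to \cite{mehta1982semistable,mehta1984restriction}. So your outline must be measured against the Mehta--Ramanathan argument itself, whose broad strategy (contradiction, relative Harder--Narasimhan filtration over the linear system, descent of the destabilizing subsheaf to $X$, and the slope identity $\mu^{H|_D}(F|_D)=a\,\mu^H(F)$) you have correctly identified.

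For the semistable case, the reduction to descent and the slope identity are right, but the descent --- which you correctly flag as the heart of the proof --- is not carried out, and the one concrete mechanism you offer for it would fail as stated. You propose to ``invoke the uniqueness of the maximal destabilizing subsheaf to match the restrictions over lower-dimensional intersections,'' but the restriction of the maximal destabilizing subsheaf of $E|_{D_1}$ to $Y=D_1\cap D_2$ is in general \emph{not} the maximal destabilizing subsheaf of $E|_Y$: maximal destabilizing subsheaves need not restrict to maximal destabilizing subsheaves, which is precisely the phenomenon the theorem is about, so uniqueness on $Y$ gives you nothing. In \cite{mehta1982semistable} the gluing is obtained instead through a quantitative step you do not have: one compares the normalized destabilizing defects across degrees by degenerating a smooth member of $|(a+b)H|$ to a reducible divisor $D_a+D_b$, proves a monotonicity/stabilization statement for these defects, and only then concludes that for large $a$ the destabilizing subsheaf is constant along the fibres of the incidence variety over $X$, hence descends. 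Without this (or a substitute), your sketch never actually consumes the hypothesis that $a$ is sufficiently large.

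The stable case as you propose it is genuinely broken: there is no ``relative Seshadri filtration.'' Seshadri (Jordan--H\"older) filtrations are not unique --- the paper even remarks that their graded objects can differ, citing \cite{BTT2017} --- so they do not form a canonical family over the parameter space, and any descent argument that rests on canonicity collapses. This is exactly why the stable case required the separate paper \cite{mehta1984restriction}. There one replaces the Seshadri filtration by the \emph{socle}, the maximal polystable subsheaf of the same slope, which is canonical and therefore does descend; and the residual case where $E|_D$ is polystable but not stable is ruled out by a different mechanism altogether: for $a$ large, restriction induces an isomorphism $\End(E)\cong\End(E|_D)$ (Serre vanishing), so a nontrivial projection operator on a decomposable $E|_D$ would contradict the simplicity of the stable sheaf $E$.
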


More refined restriction theorems which give effective bounds on the degree $a$ were proved by Flenner \cite{flenner1984restrictions} in zero characteristic and by Langer \cite[Theorem 5.2 and Corollary 5.4]{langer2004semistable} in mixed characteristic. See also the recent paper \cite{Feyzbakhsh} containing effective restriction results. We state here a variant of Langer's result.

\begin{thm}\label{thm:Langer}
If $E$ is a $H$-(semi)stable sheaf on $X$, then its restriction $E|_D$ to a general divisor $D \in |aH|$ is $(H|_D)$-(semi)stable provided that
\[
    a >  \frac{\rk(E)-1}{\rk(E)} \Delta(E)H^{n-2} + \frac{1}{\rk(E)(\rk(E)-1)H^n} + \frac{(\rk(E)-1)H^n}{\rk(E)}\gamma_{\rk(E)},
\]
where $\gamma_{r} := 0$ if $\car(k) = 0$ and $\gamma_{r} := \frac{r^2(r-1)^2}{(p-1)^2}$ if $\car(k)=p > 0$.
\end{thm}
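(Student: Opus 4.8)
\emph{Proof proposal.} The plan is to argue by contradiction. Write $r=\rk(E)$ and assume $r\ge 2$, the rank-one case being trivial. Suppose that for a general $D\in|aH|$ the restriction $E|_D$ fails to be $(H|_D)$-(semi)stable. Then the first step of its Harder--Narasimhan filtration yields a saturated destabilizing subsheaf $F_D\subset E|_D$ of some intermediate rank $s$ violating (semi)stability, that is with $\mu^{H|_D}(F_D)>\mu^{H|_D}(E|_D)$ in the semistable case (respectively $\ge$ in the stable case). The whole argument reduces to bounding the instability $\delta(D):=\mu^{H|_D}(F_D)-\mu^{H|_D}(E|_D)$ from above and showing that $\delta(D)>0$ is incompatible with the stated lower bound on $a$.

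First I would spread the fibrewise destabilizers out into a family. Over the dense open $U\subseteq|aH|$ parametrizing smooth divisors with $E|_D$ torsion-free, the subsheaves $F_D$ have bounded slope and hence form a bounded family; after shrinking $U$, the relative Harder--Narasimhan filtration of the pullback of $E$ over the universal divisor exists, and its first term restricts to $F_D$ on each fibre. This lets me treat $c_1(F_D)$ and the discriminant coherently as $D$ varies, and it is the mechanism by which the degree $a$ enters: since Chern classes restrict, one has
\[
\Delta(E|_D)\cdot(H|_D)^{n-3}=a\,\Delta(E)\cdot H^{n-2},
\]
so that the discriminant available on $D$ scales linearly with $a$.

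The numerical heart of the proof, and the step I expect to be the main obstacle, is the estimate on $\delta(D)$. Restricting to a general surface section of $D$ cut out by $(H|_D)^{n-3}$, I would combine three ingredients: the standard additivity identity expressing $\Delta(E|_D)$ through $\Delta(F_D)$, $\Delta(E|_D/F_D)$ and a mixed term proportional to $\big(\tfrac{1}{s}c_1(F_D)-\tfrac{1}{r}c_1(E|_D)\big)^2$; the Hodge index theorem, which bounds that mixed self-intersection in terms of $\delta(D)^2$ and $(H|_D)^{n-1}=aH^n$; and the Bogomolov inequality (Theorem \ref{thm:Bogomolov}) applied both to $E$, which is $H$-semistable by hypothesis, and to the semistable Harder--Narasimhan factors of $E|_D$. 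Feeding the displayed identity into this chain produces an upper bound for $\delta(D)$ whose dependence on $a$ is exactly such that $\delta(D)>0$ forces $a$ below the threshold; carefully tracking the rank factors $s$ and $r-s$ and simplifying is what yields the precise coefficients $\tfrac{r-1}{r}\Delta(E)H^{n-2}$ and $\tfrac{1}{r(r-1)H^n}$.

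Finally, in characteristic $p>0$ the argument is more delicate, because $\mu$-semistability need not survive Frobenius pullback and the naive inequality $\Delta\cdot H^{n-2}\ge 0$ fails. Here I would invoke the corrected Bogomolov inequality of Theorem \ref{thm:Bogomolov}, whose error term $\tfrac{r^2(r-1)^2}{(p-1)^2}\omega^n$ propagates through the estimate and produces precisely the quantity $\gamma_r$ in the bound. Assembling everything gives a contradiction whenever $a$ exceeds the stated value, the semistable and stable cases being obtained in parallel by carrying the strict, respectively non-strict, inequalities through each step.
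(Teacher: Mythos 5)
Your plan founders at precisely the step you identify as its ``numerical heart'': the three ingredients you propose to combine there (restriction of the discriminant, Bogomolov on the Harder--Narasimhan factors of $E|_D$, Hodge index on $D$) all produce inequalities pointing the same way, and together they are consistent with \emph{arbitrary} instability of $E|_D$, so no contradiction and no threshold for $a$ can come out of them. Concretely, write $h = H|_D$, let $0 \to F \to E|_D \to Q \to 0$ be the destabilizing sequence with ranks $s$ and $q = r-s$, and set $\xi := c_1(F)/s - c_1(Q)/q$, so that instability means $\delta := \xi\cdot h^{n-2} > 0$. The additivity identity is
\[
\frac{\Delta(E|_D)}{r} = \frac{\Delta(F)}{s} + \frac{\Delta(Q)}{q} - \frac{sq}{r}\,\xi^2 .
\]
Bogomolov applied to (the HN factors of) $F$ and $Q$ on $D$ gives the \emph{lower} bound $sq\,\xi^2\cdot h^{n-3} \ge -\Delta(E|_D)\cdot h^{n-3} = -a\,\Delta(E)\cdot H^{n-2}$, while Hodge index gives the \emph{upper} bound $\xi^2\cdot h^{n-3} \le \delta^2/(aH^n)$. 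Chaining these yields $\delta^2 \ge -a^2\,\Delta(E)H^{n-2}H^n/(sq)$; but Bogomolov applied to the semistable sheaf $E$ on $X$ (Theorem \ref{thm:Bogomolov}) says $\Delta(E)H^{n-2}\ge 0$, so the right-hand side is nonpositive and the inequality holds vacuously for every $\delta$. Nothing squeezes $\delta$ against $a$. (Two symptoms of the same problem: for $n=2$ the divisor $D$ is a curve and there are no discriminants or Bogomolov inequality on $D$ at all, yet this is the original Bogomolov case of the theorem; and your relative HN filtration is used only to organize Chern classes, never to produce an object on $X$.)

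What is missing is a mechanism transferring the instability on $D$ back to a sheaf on $X$, and this is the actual content of the proof in the cited source \cite{langer2004semistable} --- note that the paper under review is a survey and states Theorem \ref{thm:Langer} without proof. Langer, following Bogomolov's method on surfaces, forms the elementary modification $E' := \ker\bigl(E \to i_*Q\bigr)$, where $i: D \hookrightarrow X$ is the inclusion; this is a torsion-free sheaf on $X$ whose discriminant can be computed and picks up a term quadratic in $a$ (through $[D]^2 = a^2H^2$) competing against a term involving the destabilization. One cannot apply Bogomolov to $E'$, which need not be semistable; instead Langer applies his inequality valid for \emph{arbitrary} torsion-free sheaves,
\[
H^n\cdot\Delta(E')H^{n-2} + r^2\bigl(\mu_{\max}(E')-\mu(E')\bigr)\bigl(\mu(E')-\mu_{\min}(E')\bigr) \ge 0
\]
in characteristic zero (with a Frobenius correction producing $\gamma_r$ in characteristic $p$), and uses semistability of $E$ to bound $\mu_{\max}(E') \le \mu(E)$ since $E' \subset E$. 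This is where the precise threshold, including the terms $\frac{1}{r(r-1)H^n}$ and $\frac{(r-1)H^n}{r}\gamma_r$, comes from. Any correct proof needs some such descent step --- Mehta--Ramanathan (Theorem \ref{thm:MRrestrictions}) instead descend the destabilizing subsheaf itself, at the cost of all effectivity --- and your proposal contains none.
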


\begin{rmk}
In Theorem \ref{thm:Langer}, ``general'' can be made explicit depending on $E$. More precisely, if $0 = E_0 \subset \ldots \subset E_m = E$ is a Seshadri filtration of $E$, then the statement holds for any smooth (even normal) divisor $D \in |aH|$ so that any factor $E_i/E_{i-1}$ restricted to $D$ remains torsion-free.
 \end{rmk}

\section{Families of slope-semistable sheaves}

In this section we present properties of families of slope-semistable sheaves which are essential in moduli theory. An $S$-flat family of coherent sheaves on $X$ is by definition a coherent $\cO_{S \times X}$-module $\cE$, flat over $S$. The parameterizing space $S$ is either a $k$-scheme or a complex analytic space, depending on whether we work in the algebraic or analytic setup respectively. 

\subsection{Boundedness of sets of coherent sheaves}

We recall below what we mean by a \textit{bounded} set of coherent sheaves on $X$. 

\begin{defn}
Let $\cS$ be a set of isomorphism classes of coherent sheaves on $X$. We say that $\cS$ is \textit{bounded} if 
\begin{enumerate}
    \item[(AG)] there exists a scheme $S$ of finite type over $k$ and  a coherent sheaf $E$ on $S \times X$ such that $\cS$ is contained in the set of isomorphism classes of fibers of $E$ over points of $S$ \cite[Definition 1.7.5]{HuybrechtsLehn}.
    \item[(CG)] there exists a complex analytic space $S$, a compact subset $K \subset S$ and a coherent sheaf $E$ on $S \times X$ such that $\cS$ is contained in the set of isomorphism classes of fibers of $E$ over points of $K$ \cite[Definition 5.1]{toma2021boundedness}.
\end{enumerate}
\end{defn}

When $X$ is complex projective, the above two definitions are in fact equivalent via the GAGA Theorem, cf. \cite[Remark 3.3]{toma2016boundednessK}. 

Note that if $\cS$ is a bounded family of sheaves on $X$, then the Chern classes (seen in the numerical group of $X$ in the algebraic case, and in the singular cohomology ring $H^*(X,\Z)$ in the analytic case respectively) of the elements in $\cS$ range within a finite set.

The following boundedness criterion is due to Grothendieck \cite{grothendieck1961techniques} in the algebraic case. The analytic version can be found in \cite{toma2021boundedness}.

 \begin{prop}\label{prop:boundednessCriterion}
Let $\cS$ be a set of isomorphism classes of torsion-free sheaves on $X$. Then $\cS$ is bounded if and only if the following two conditions are fulfilled
\begin{enumerate}
    \item $\cS$ is dominated, i.e., there exists a bounded set $\cT$ of classes of coherent sheaves on $X$ such that each element of $\cS$ is a quotient of an element of $\cT$,
    \item the slope function $\mu^\omega$ is upper bounded on $\cS$.
\end{enumerate}
 \end{prop}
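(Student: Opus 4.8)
The plan is to prove both directions of this boundedness criterion, with the forward direction being straightforward and the converse being the substance of the result.

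For the forward direction, suppose $\cS$ is bounded. To verify condition (1), I would exhibit a single bounded dominating set. In the algebraic case, since $\cS$ is parameterized by a sheaf $E$ on $S \times X$ with $S$ of finite type, I can take a surjection from a direct sum of line bundles onto $E$ after twisting and use relative versions of Serre's theorems to produce, over a stratification of $S$, a uniform presentation $\bigoplus \cO_X(-N)^{\oplus a} \twoheadrightarrow E_s$ for all fibers $E_s$; the collection of such kernels-as-quotients shows each element of $\cS$ is a quotient of a fixed bounded family $\cT$ (concretely, direct sums of a single twisted line bundle). Condition (2) is immediate: since $\cS$ is bounded, the Chern classes and ranks of its elements lie in a finite set, so the slope $\mu^\omega$ takes finitely many values and is in particular bounded above.

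For the converse, which is the heart of the statement, I would assume $\cS$ is dominated and $\mu^\omega$-bounded and deduce boundedness. The strategy is to reduce boundedness of $\cS$ to the boundedness of the dominating family $\cT$ together with control on the possible quotients. First I would fix the bounded dominating family $\cT$; since $\cT$ is bounded, the Hilbert polynomials (equivalently the Chern classes and ranks) of its members lie in a finite set, and the same is then forced for the ranks of the quotients in $\cS$. The key point is then to bound the Hilbert polynomials of the elements $E \in \cS$: each such $E$ is a quotient $T \twoheadrightarrow E$ of some $T \in \cT$. The upper bound on $\mu^\omega(E)$ controls the leading coefficient of the relevant numerical invariant, and one must leverage this, via a Grothendieck-style argument using the theory of the Quot scheme (or its analytic analogue), to show that only finitely many Hilbert polynomials can occur among torsion-free quotients of the bounded family $\cT$. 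Once the Hilbert polynomials are confined to a finite set, one concludes that $\cS$ is contained in the image of finitely many relative Quot schemes $\Quot(T, P)$ over the parameter space for $\cT$, and these are of finite type (respectively admit the required compact $K$ in the analytic case), yielding boundedness.

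The main obstacle is precisely this finiteness of Hilbert polynomials of quotients under only an upper slope bound. The difficulty is that a priori the lower-order coefficients of the Hilbert polynomial could run to infinity even when the leading slope term is bounded; the classical Grothendieck lemma handles this by an inductive argument on the dimension together with a key estimate (of Kleiman or Grothendieck type) bounding all the coefficients of the Hilbert polynomial of a torsion-free quotient in terms of the rank, the slope, and fixed data of $\cT$. In the algebraic setting this is \cite{grothendieck1961techniques}; in the analytic setting one does not have direct access to the Quot scheme machinery and the finite-type reductions, so the argument must instead be carried out through the Grauert–Remmert and Barlet-type relative Douady space techniques, following \cite{toma2021boundedness}, where the compact set $K$ replaces finite-typeness. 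I would therefore expect the bulk of the work, and the place where algebraic and analytic proofs genuinely diverge, to be in establishing this uniform control of Hilbert polynomials and the associated finiteness.
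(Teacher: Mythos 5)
Your proposal is correct and takes essentially the same route as the paper: the paper offers no proof of this proposition, attributing the algebraic case to Grothendieck \cite{grothendieck1961techniques} and the analytic case to \cite{toma2021boundedness}, and your argument (easy forward direction; converse reduced to the Grothendieck-lemma finiteness of Hilbert polynomials for torsion-free quotients with slope bounded above, implemented via relative Quot schemes in the algebraic setting and Douady-space/compact-exhaustion techniques in the analytic one) is precisely the content of those references. You also correctly identify the crux — that an upper slope bound alone must be parlayed, by induction on dimension, into control of all lower-order Hilbert coefficients — which is where the cited proofs do their work.
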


\subsection{Openness of semistability}

The following result shows that slope-semistability, resp. slope-stability, is an \textit{open} property in flat families of sheaves. Its proof is based on the boundedness criterion stated in Proposition \ref{prop:boundednessCriterion}.
\begin{prop}\label{prop:Openness}
Let $(X,\omega)$ be a polarized space as in our (AG) or (KG) setups. Let $\cE$ be an $S$-flat family of coherent sheaves on $X$. Then the locus $S^\circ$ of closed points $s \in S$ such that $\cE|_{\{s\} \times X}$ is $\omega$-semistable (resp. $\omega$-stable) is a Zariski open subset of $S$. 
\end{prop}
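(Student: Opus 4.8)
The plan is to show that the complement $S \setminus S^\circ$, i.e.\ the locus of points $s$ where $\cE_s := \cE|_{\{s\}\times X}$ fails to be $\omega$-semistable, is closed. First I would dispose of the torsion condition: since torsion-freeness (equivalently, purity in top dimension) is an open property in flat families, the locus $S' \subseteq S$ where the fibers $\cE_s$ are torsion-free is open, and it contains $S^\circ$. Working over $S'$, a torsion-free fiber $\cE_s$ fails to be $\omega$-semistable precisely when it admits a torsion-free quotient $\cE_s \twoheadrightarrow Q$ with $0 < \rk(Q) < \rk(\cE_s)$ and $\mu^\omega(Q) < \mu^\omega(\cE_s)$; one may always take for $Q$ the minimal destabilizing quotient $\cE_s/E_{m-1}$ coming from the Harder--Narasimhan filtration, which is $\omega$-semistable of slope $\mu^\omega(Q) = \mu_{\min}(\cE_s)$.

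The crucial step, and the one where Proposition \ref{prop:boundednessCriterion} enters, is to show that the set $\mathcal{Q}$ of all such minimal destabilizing quotients, as $s$ ranges over $S'$, is bounded. I would verify the two hypotheses of the boundedness criterion. For (1), each member of $\mathcal{Q}$ is a quotient of some $\cE_s$, and the family $\{\cE_s\}_{s}$ is itself bounded (being the fibers of a coherent sheaf over a base of finite type, resp.\ over a compact subset in the analytic case), so $\mathcal{Q}$ is dominated by a bounded set. For (2), the slope of a member equals $\mu_{\min}(\cE_s)$, which satisfies $\mu_0 \le \mu_{\min}(\cE_s) < \mu^\omega(\cE_s) \le C$: the upper bound $C$ and the finiteness of the relevant numerical types both come from the fact that the Chern classes of the bounded family $\{\cE_s\}$ range in a finite set, while the uniform lower bound $\mu_0$ on the minimal slopes is the standard consequence of boundedness of $\{\cE_s\}$ (see \cite[Section 1.7]{HuybrechtsLehn} and \cite{toma2021boundedness}). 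This is the main obstacle, as it is precisely here that boundedness must be converted into a uniform numerical bound.

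Once $\mathcal{Q}$ is bounded, its members have Chern classes---equivalently, in the (AG) and (KG) cases, Hilbert polynomials $P_\omega$---lying in a finite set $\{P_1,\dots,P_k\}$, each of intermediate rank and with slope strictly below $\mu^\omega(\cE_\bullet)$ (a locally constant quantity on $S'$). For each $P_j$ I would form the relative Quot scheme $\Quot^{P_j}_{\cE/S'} \to S'$, which is proper over $S'$ (projective in the (AG) case; in the (KG) case one uses instead the corresponding proper component of the relative Douady space, boundedness being available analytically by \cite{toma2021boundedness}), so that the image of each is closed. Using additivity of $c_1 \cdot \omega^{n-1}$ and of the rank in a short exact sequence, for an arbitrary quotient $\cE_s \twoheadrightarrow Q$ of type $P_j$ the kernel $F = \ker(\cE_s \to Q)$ is torsion-free with $\mu^\omega(F) > \mu^\omega(\cE_s)$ exactly when $\mu^\omega(Q) < \mu^\omega(\cE_s)$; hence $s$ lies in the image of some $\Quot^{P_j}_{\cE/S'}$ if and only if $\cE_s$ is not $\omega$-semistable. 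Thus $S' \setminus S^\circ$ is a finite union of closed (analytic) subsets, so $S^\circ$ is Zariski open.

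Finally, the stable case is entirely analogous: one repeats the argument allowing $\mu^\omega(Q) \le \mu^\omega(\cE_s)$, so that the relevant quotients still have slope in the bounded range $[\mu_0, \mu^\omega(\cE_s)]$ and form a bounded family; the destabilizing-for-stability condition $\mu^\omega(Q) \le \mu^\omega(\cE_s)$ again singles out finitely many Hilbert polynomials whose associated proper Quot schemes have closed image, showing that the non-stable locus is closed.
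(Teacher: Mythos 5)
Your proposal is correct and takes essentially the same approach as the paper's proof, which is given by reference to \cite[Proposition 2.3.1]{HuybrechtsLehn} and \cite[Corollary 6.7]{toma2021boundedness}: reduce to the open torsion-free locus, apply the boundedness criterion of Proposition \ref{prop:boundednessCriterion} to the destabilizing quotients to get finitely many numerical types, and write the non-(semi)stable locus as a finite union of images of proper relative Quot schemes (resp.\ Douady spaces in the K\"ahler case). No gaps worth flagging.
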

\begin{proof}
See \cite[Proposition 2.3.1]{HuybrechtsLehn} for a proof in the algebraic case, and \cite[Corollary 6.7]{toma2021boundedness} for the analytic case. 
\end{proof}

\begin{rmk}
The above result is in general false for non-K\"ahler Gauduchon manifolds, see \cite{TelemanOpenness}. 
\end{rmk}

\subsection{Langton's semistable reduction}

We treat the algebraic case. For the analytic case see \cite{toma2020criteria}.

\begin{prop}[Semistable reduction, \cite{langton1975valuative}]\label{prop:SReduction}
Let $(X,\omega)$ be as in the (AG) setup. Let $R$ be a DVR over $k$ of quotient field $K$. Let $E$ be an $R$-flat family of coherent sheaves on $X$ such that $E_K$ is $\omega$-semistable. Then there exists a subsheaf $F \subset E$ such that $F_K \cong E_K$ and such that $F_k$ is $\omega$-semistable.
\end{prop}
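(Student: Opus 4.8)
The plan is to follow Langton's strategy of repeated elementary modifications along the special fibre, showing that the procedure must terminate in a semistable special fibre. Write $\pi$ for a uniformizer of $R$, so that $k = R/\pi$ is the residue field and $K$ the quotient field; by flatness the special fibre $E_k := E \otimes_R k$ and the generic fibre $E_K$ have the same rank $r$ and the same $c_1 \cdot \omega^{n-1}$, hence the same slope, which I denote $\mu := \mu^\omega(E_K)$. If $E_k$ is already $\omega$-semistable I take $F = E$. Otherwise, using the Harder--Narasimhan property recalled above, I let $B \subset E_k$ be the maximal destabilizing subsheaf, put $G = E_k / B$, and define the elementary modification $E' := \Ker(E \twoheadrightarrow E_k \twoheadrightarrow G)$.

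First I would record the elementary properties of this modification. Since $E'$ is a subsheaf of the $\pi$-torsion-free sheaf $E$ it is again $\pi$-torsion-free, hence $R$-flat; and since $G$ is supported on the special fibre, localizing $0 \to E' \to E \to G \to 0$ at the generic point gives $E'_K \cong E_K$. Tensoring the same sequence with $k$ and using $\Tor^R_1(G,k) \cong G$ (as $G$ is annihilated by $\pi$) together with $\Tor^R_1(E,k) = 0$, I obtain the exact sequence $0 \to G \to E'_k \to B \to 0$ describing the new special fibre. Iterating produces a descending chain $E = E_0 \supset E_1 \supset E_2 \supset \cdots$ with $\pi E_i \subset E_{i+1} \subset E_i$, all with generic fibre $E_K$, together with maximal destabilizing subsheaves $B_i \subset E_{i,k}$, quotients $G_i = E_{i,k}/B_i$, and extensions $0 \to G_i \to E_{i+1,k} \to B_i \to 0$.

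The core of the argument is to show this process stops. I would first prove that the maximal slope $\mu^\omega(B_i)$ is non-increasing in $i$: given the maximal destabilizing subsheaf $B' \subset E_{i+1,k}$, its image in $B_i$ has slope $\le \mu^\omega(B_i)$ (as $B_i$ is semistable), while its intersection with $G_i$, if of positive rank, has slope $\le \mu^\omega_{\max}(G_i) < \mu^\omega(B_i)$ (since $B_i$ is the first Harder--Narasimhan term of $E_{i,k}$); the weighted-average bound for the extension $0 \to B' \cap G_i \to B' \to \im(B') \to 0$ then forces $\mu^\omega(B') \le \mu^\omega(B_i)$. Since each $B_i$ destabilizes, $\mu^\omega(B_i) > \mu$, and these slopes lie in the discrete set $\tfrac{1}{\mathrm{lcm}(1,\dots,r)}\Z$; a non-increasing sequence in such a set bounded below by $\mu$ must stabilize, say at $\mu^\infty$ for $i \ge i_0$.

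The main obstacle is the final step: deriving a contradiction from non-termination once the maximal slope has stabilized. In the equality regime the computation above forces the composite $B_{i+1} \hookrightarrow E_{i+1,k} \twoheadrightarrow B_i$ to be injective modulo torsion with $\mu^\omega(B_{i+1}) = \mu^\omega(B_i) = \mu^\infty$, so the $B_i$ form a compatible system of subsheaves of constant slope. Using the sandwich $\pi E_i \subset E_{i+1} \subset E_i$ I would patch these, via $\pi$-adic and coherence arguments, into an honest $R$-flat subsheaf $\mathcal{B} \subset E$ whose special fibre is the stabilized destabilizing subsheaf. By flatness $\mu^\omega(\mathcal{B}_K) = \mu^\omega(\mathcal{B}_k) = \mu^\infty > \mu = \mu^\omega(E_K)$, contradicting the $\omega$-semistability of $E_K$. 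Hence some $E_N$ has semistable special fibre and $F = E_N$ works. Making this last step precise---turning the eventually-constant destabilizing subsheaves into a genuine flat lift---is the delicate point, and is where Langton's original argument concentrates its effort.
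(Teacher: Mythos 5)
The first half of your argument is correct, and it coincides with the opening of Langton's proof, which is all the paper offers for this statement (the proposition is quoted from \cite{langton1975valuative} without proof): the modification $E_{i+1}=\Ker(E_i \to G_i)$ is flat with unchanged generic fibre and with special fibre an extension $0 \to G_i \to E_{i+1,k} \to B_i \to 0$, the slopes $\mu^\omega(B_i)$ are non-increasing, and discreteness plus the bound $\mu^\omega(B_i)>\mu$ forces them to stabilize. The genuine gap is the step you explicitly defer: it is not a finishing touch, it is the theorem, and the patching you propose cannot be carried out in the form you state it. A tower $E=E_0\supset E_1\supset\cdots$ with $\pi E_i\subset E_{i+1}$ converges only $\pi$-adically, so its ``limit'' lives over the completion $\widehat R$, not over $R$: for $\alpha\in\widehat R\setminus R$ the submodules $F_i=\{(a,b)\in R^2: a-\alpha b \in \pi^i\widehat R\}$ satisfy $\pi F_i\subset F_{i+1}\subset F_i$ and all have rank $2$, yet $\bigcap_i F_i=0$ because $K\cap\widehat R=R$, the rank-one limit $\widehat R\cdot(\alpha,1)$ existing only over $\widehat R$. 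Langton's conclusion accordingly runs through the quotients rather than the subsheaves: one upgrades the stabilization to show that for $i\gg 0$ the extensions above split, so that $G_i\cong G$ is constant and each quotient $E/E_i$ is flat over $R/\pi^i$; the system $(E/E_i)_i$ is then a formal flat quotient of $E\otimes_R\widehat R$, which Grothendieck's existence theorem (here projectivity of $X$ enters) algebraizes to a flat quotient $Q$ with $Q_k\cong G$; finally $\mu^\omega(Q_{\widehat K})=\mu^\omega(G)<\mu$ destabilizes $E_{\widehat K}$, and this contradicts the hypothesis only after one also checks that slope-semistability is insensitive to the base field extension $K\subset\widehat K$. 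None of these ingredients (splitting, flatness of $E/E_i$, completion and algebraization, base-change invariance of semistability) appears in your sketch, and the first is genuinely delicate in the slope setting you are treating: constancy of rank and slope only makes $B_{i+1}\to B_i$ injective with cokernel supported in codimension $\ge 2$, not an isomorphism, so even your ``compatible system'' claim requires an argument.

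There is a second, smaller gap at the very first step: you apply Harder--Narasimhan theory to $E_k$, but the HN property recalled in the paper (and the torsion-freeness of $G_i$ that your slope comparison implicitly uses) concerns torsion-free sheaves, while the special fibre of an $R$-flat family with torsion-free generic fibre can perfectly well have torsion; since $\omega$-semistability in this paper includes torsion-freeness, such an $E_k$ is not semistable, yet your machine cannot start on it. Langton's proof handles this by a preliminary chain of modifications along the torsion of the special fibre, with its own termination argument, before the slope stage you describe. In short: right strategy, correct reduction steps, but the two hard points --- purity of the special fibre and the formal-limit contradiction --- are precisely the ones left unproved.
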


\subsection{Boundedness of semistability}

The Chern character of a coherent sheaf $E$ on $X$ will determine its numerical type $\ch(E)$ in the numerical group $\Knum(X)_\Q$ in the (AG) case, respectively in the singular cohomology group $H^*(X,\Q)$ in the (CG) case. Given a class $\gamma$ in $\Knum(X)_\Q$, respectively in $H^*(X,\Q)$, we consider the following \textit{boundedness} statement:
\\

$\textbf{B}_\gamma(\omega):$ The set of isomorphism classes of coherent sheaves $E$ of class $\gamma$ on $X$ that are $\omega$-semistable is bounded. 
\\

Boundedness of semistable sheaves was intensively studied and it took the efforts of many mathematicians to completely solve the algebraic case (e.g. \cite{atiyah57vector,KleimanSga6, takemoto1972stable, gieseker77, Maruyama81boundedness,simpson1994moduli,langer2004semistable}).

\begin{thm}\label{thm:BoundednessAG}
In the (AG) setup, boundedness of semistability $\textbf{B}_\gamma(\omega)$ holds for any numerical class $\gamma$.   
\end{thm}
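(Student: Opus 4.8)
The plan is to prove boundedness $\textbf{B}_\gamma(\omega)$ by reducing it to the boundedness criterion of Proposition~\ref{prop:boundednessCriterion}. Since the numerical class $\gamma$ is fixed, the rank $r = \rk(E)$ and the slope $\mu^\omega(E)$ are constant across all $\omega$-semistable sheaves $E$ in our family; in particular condition~(2) of that criterion (upper boundedness of $\mu^\omega$) is automatic. Thus the entire problem collapses to verifying condition~(1): that the set of $\omega$-semistable sheaves of class $\gamma$ is \emph{dominated} by a bounded set. Concretely, I would aim to produce a single coherent sheaf $G$ (or a bounded family of such) together with surjections $G \twoheadrightarrow E$ for every semistable $E$ of class $\gamma$.

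\medskip

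The standard route to domination is to bound the number of global sections of suitable twists. First I would fix an ample divisor $H$ representing $\omega$ and try to show that there is an integer $m_0$, depending only on $\gamma$ and $(X,H)$, such that every $\omega$-semistable $E$ of class $\gamma$ is $m_0$-regular in the sense of Castelnuovo--Mumford, or at least globally generated after the twist $E(m_0)$ with a uniformly bounded number of generators. Given such uniform generation, one obtains surjections $\cO_X(-m_0)^{\oplus N} \twoheadrightarrow E$ with $N = h^0(E(m_0))$ controlled independently of $E$, and then $\cS$ is dominated by the bounded set $\{\cO_X(-m_0)^{\oplus N}\}$, closing the argument. The technical heart is therefore an \emph{effective} bound on $h^0(E(m))$ for semistable $E$.

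\medskip

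To obtain that bound I would induct on $\dim X = n$, using the restriction theorems of the previous section as the engine of the induction. The Mehta--Ramanathan Theorem~\ref{thm:MRrestrictions} (or Langer's effective version, Theorem~\ref{thm:Langer}) guarantees that the restriction $E|_D$ to a general divisor $D \in |aH|$ of controlled degree remains semistable, which lets me pass from dimension $n$ to dimension $n-1$. In the characteristic-zero case the Bogomolov inequality (Theorem~\ref{thm:Bogomolov}) supplies the crucial \emph{a priori} control: $\Delta(E)\cdot \omega^{n-2} \ge 0$ bounds the discriminant, hence bounds the higher Chern data entering the Euler characteristic, so that $h^0(E(m))$ cannot grow uncontrollably. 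Combining these with the restriction sequence $0 \to E(m-a) \to E(m) \to E|_D(m) \to 0$ and the inductive hypothesis on $D$ yields the desired uniform estimate on sections, completing the proof that the family is dominated and therefore bounded. In positive characteristic I would use Langer's corrected Bogomolov inequality with the extra $\tfrac{r^2(r-1)^2}{(p-1)^2}\omega^n$ term, which is exactly what makes the bound survive the failure of the naive inequality.

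\medskip

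The main obstacle I anticipate is the inductive bound on the number of sections: one must ensure that the degree $a$ of the restricting divisor, the regularity threshold $m_0$, and the section count $N$ all remain controlled \emph{simultaneously} and depend only on $\gamma$, not on the individual sheaf. This is delicate because the restriction theorem's bound on $a$ itself involves $\Delta(E)$, so the whole scheme is only self-consistent once Bogomolov has pinned down the discriminant in terms of $\gamma$. Making the constants explicit and uniform --- rather than merely existent --- is precisely the hard technical content that occupied the literature cited, and is where Langer's approach via the discriminant and his restriction theorem provides the cleanest closure.
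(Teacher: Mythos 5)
The paper itself offers no proof of Theorem \ref{thm:BoundednessAG}: being a survey, it records the statement as the outcome of a long line of work, with the complete case in arbitrary characteristic due to Langer \cite{langer2004semistable}. So your proposal must be measured against those proofs. Your skeleton --- reduce to Proposition \ref{prop:boundednessCriterion}, observe that condition (2) is automatic since $\gamma$ fixes the slope, and get domination from uniform section bounds proved by induction on dimension via restriction to divisors --- is a recognizable route, but two of your ingredients are mis-deployed. First, Mehta--Ramanathan (Theorem \ref{thm:MRrestrictions}) cannot serve as the engine of the induction: it is not effective, the threshold degree $a$ depends on the individual sheaf $E$ and not only on $\gamma$, so no single $a$ works across an a priori unbounded family. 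You must commit to the effective versions (Flenner \cite{flenner1984restrictions} in characteristic zero, Theorem \ref{thm:Langer} in general), whose thresholds depend only on $\rk(E)$, $\Delta(E)\cdot H^{n-2}$ and $H^n$. Second, Bogomolov's inequality plays no role in your argument: the numerical class $\gamma$ determines \emph{all} Chern classes, hence $\Delta(E)\cdot H^{n-2}$ and the polynomial $\chi(E(m))$ are literally constant over the family. Nothing needs to be ``pinned down'', and the difficulty was never the Euler characteristic but the gap between $h^0$ and $\chi$. Bogomolov enters only inside the proof of Theorem \ref{thm:Langer}, which you are using as a black box; your closing paragraph, which presents the self-consistency of the whole scheme as hinging on Bogomolov, misdiagnoses where the strain actually is.

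The strain is in two steps you assert but do not supply. (i) A uniform bound on $h^0(E(m))$ does not by itself produce the surjections $\cO_X(-m_0)^{\oplus N} \twoheadrightarrow E$ needed for domination: you need uniform global generation, i.e.\ uniform Castelnuovo--Mumford regularity, hence vanishing of higher cohomology. This is extracted from $h^0$-bounds along a full flag of successive restrictions by Mumford's regularity argument (equivalently, by Kleiman's boundedness criterion, which is the form of the criterion the cited proofs actually use, rather than Proposition \ref{prop:boundednessCriterion}). Your induction does generate such flag bounds, but the step from them to regularity is the technical heart and is missing. (ii) The inductive hypothesis is invoked for $E|_D$ on a divisor $D$ that \emph{varies with} $E$; to sum the restriction sequences you need the dimension-$(n-1)$ bounds to hold uniformly over all smooth $D \in |aH|$, which requires a boundedness statement in families --- this is precisely why Langer sets up the induction over a base. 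Finally, be aware that the literature runs in the opposite logical order: boundedness is proved first (from Grauert--M\"ulich-type, resp.\ Frobenius, bounds on $\mu_{\max}$ of hyperplane restrictions, plus Le Potier--Simpson estimates and Kleiman's criterion), and Bogomolov and the effective restriction theorem are deduced afterwards. Your reversed route is not circular only because the known proofs of Theorems \ref{thm:Bogomolov} and \ref{thm:Langer} do not use boundedness; that is true, but it is a fact to be verified, not assumed.
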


In the (KG) case, the boundedness statement is not yet known in full generality. We present here a few results that indicate its validity. 

\begin{thm}\label{thm:BoundednessKGI}
Let $(X,\omega)$ be a compact K\"ahler manifold and $\gamma$ be a topological class in $H^*(X,\Q)$. Then $\textbf{B}_\gamma(\omega)$ is known in the following cases:
\begin{enumerate}
    \item when $\gamma$ is the class of a rank 1 coherent sheaf \cite[Corollary 5.5]{toma2016boundednessK},
    \item when $X$ is complex projective, but $\omega$ is not necessarily an ample class \cite[Proposition 6.3]{greb2017compact}.
    \item when $X$ is a (not necessarily algebraic) K3 surface or a 2-dimensional torus and the class $\omega$ is $\gamma$-generic, see \cite[Theorem 4.3]{Toma-Documenta} and \cite[Section 2.2.1]{PeregoToma} for details.
\end{enumerate}
\end{thm}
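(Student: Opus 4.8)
The plan is to treat the three cases separately, but in each to reduce to the boundedness criterion of Proposition~\ref{prop:boundednessCriterion}: a set $\cS$ of torsion-free sheaves is bounded if and only if it is dominated by a bounded family and the slope $\mu^\omega$ is bounded above on $\cS$. Since the class $\gamma$ is fixed, the slope $\mu^\omega(E)=\bigl(c_1(\gamma)\cdot\omega^{n-1}\bigr)/\rk(\gamma)$ is a constant on $\cS$, so the second condition is automatic and in every case the real content is to exhibit the dominating bounded family, equivalently to bound, with respect to some auxiliary ample polarization, the maximal slope of subsheaves of the members of $\cS$.

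For (1), the rank-one case, condition (2) in the definition of semistability is vacuous, so $\cS$ is the set of \emph{all} rank-one torsion-free sheaves of class $\gamma$. I would write each such $E$ as the kernel of a surjection $E^{\vee\vee}\to Q$, where the reflexive hull $E^{\vee\vee}$ is a line bundle with $c_1(E^{\vee\vee})=c_1(\gamma)$ and $Q$ is supported in codimension $\ge 2$ with invariants fixed by $\gamma$ (so of bounded length on a surface). Line bundles with fixed $c_1$ form a bounded family, being a torsor over the compact Picard torus $\Pic^0(X)=H^1(X,\cO_X)/H^1(X,\Z)$; and subsheaves of bounded colength of a bounded family of line bundles again form a bounded family by a relative Douady/Quot argument. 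Hence $\cS$ is bounded.

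For (2), $X$ projective with $\omega$ Kähler but possibly transcendental, the key observation is that the slope sees $\omega$ only through the linear functional $\ell_\omega\colon \mathrm{NS}(X)_\R\to\R$, $D\mapsto D\cdot\omega^{n-1}$, since $c_1$ of any coherent sheaf lies in $\mathrm{NS}(X)$. I would show that for fixed $\gamma$ this functional can be replaced by one induced from an ample class without enlarging the set of semistable sheaves, using that the walls for $\gamma$ (loci in the space of functionals where a potential destabilizing subsheaf attains equal slope) are locally finite; then the $\omega$-semistable sheaves lie in a bounded union of chambers governed by the algebraic boundedness Theorem~\ref{thm:BoundednessAG}. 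The Bogomolov inequality (Theorem~\ref{thm:Bogomolov}) and the restriction theorems (Theorems~\ref{thm:MRrestrictions} and~\ref{thm:Langer}) feed into both the local finiteness of walls and the uniform control of destabilizing subsheaves. This is the route of Greb--Toma.

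For (3), $X$ a non-algebraic K3 surface or $2$-torus with $\omega$ being $\gamma$-generic, one has on a surface that $\Delta(E)\cdot\omega^{n-2}=\Delta(E)$ is the fixed number $\Delta(\gamma)$, which Theorem~\ref{thm:Bogomolov} forces to be $\ge 0$; moreover genericity of $\omega$ rules out strictly semistable sheaves, so $\omega$-semistable $=\omega$-stable. I would again pass to reflexive hulls --- reflexive equals locally free on a smooth surface --- with quotient of bounded length, reducing to bounding $\omega$-stable vector bundles of fixed Chern character. The main obstacle is precisely here: on a non-algebraic K3 or torus there are in general no ample curves, so the restriction-to-curves machinery underlying Theorem~\ref{thm:BoundednessAG} is unavailable. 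Two substitutes present themselves --- the Kobayashi--Hitchin correspondence (Theorem~\ref{thm:KH}), turning $\omega$-stable bundles into irreducible Hermite--Einstein connections whose moduli is compact by gauge-theoretic a priori bounds (genericity being what excludes the reducible/bubbling degenerations), or a more direct argument exploiting the lattice structure of $H^{1,1}$ on K3/torus to produce finitely many walls and hence an effective bound on destabilizing subsheaves. Either route supplies the bounded dominating family and closes the argument via Proposition~\ref{prop:boundednessCriterion}; the lattice-theoretic route is the one realized in the work of Toma and of Perego--Toma.
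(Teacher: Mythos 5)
First, note that the paper itself contains no proof of this theorem: it is a survey statement, and the citations in its text \emph{are} the proof. Your sketches therefore have to stand on their own, and while your overall framing via Proposition~\ref{prop:boundednessCriterion} is sensible and several observations are correct (compactness of $\Pic^0(X)$ in the K\"ahler setting, reduction to reflexive hulls, genericity forcing semistable $=$ stable on a surface), each of the three cases has a genuine gap. The most serious is in (2): your reduction to an ample polarization runs the logic backwards. As this survey itself stresses in Section~\ref{sect:further}, the local finiteness of walls is a \emph{consequence} of (uniform) boundedness, not an input to it, so you cannot invoke a chamber structure to prove $\textbf{B}_\gamma(\omega)$. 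Moreover, even granting such a structure, there is no reason the chamber containing the functional $\ell_\omega$ contains any functional $\ell_H$ with $H$ ample: $\mathrm{NS}(X)_\R$ is in general a proper subspace of $H^{1,1}(X,\R)$, a transcendental K\"ahler class cannot be approximated by ample classes, and the image of the non-linear map $H \mapsto H^{n-1}$ need not come close to $\omega^{n-1}$ as a functional on $\mathrm{NS}(X)_\R$. The cited work of Greb--Toma proves boundedness \emph{directly} for stability with respect to such real, possibly transcendental positive polarizations, by extending Langer-type estimates; the wall-and-chamber structure in higher dimensions is deduced afterwards.

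In (3), the gauge-theoretic route fails as stated: $\gamma$-genericity excludes reducible (i.e.\ strictly semistable) objects, but it does not exclude Uhlenbeck bubbling, and the moduli space of irreducible Hermite--Einstein connections with fixed Chern classes is not compact in general. Even precompactness in the Uhlenbeck topology would not give boundedness in the sense required here --- a family over a compact subset of a complex-analytic parameter space --- since identifying Uhlenbeck limits with coherent sheaves is itself a hard theorem even on projective surfaces (Section~\ref{sect:further}). What actually powers the cited proofs of Toma and Perego--Toma is the lattice-theoretic fact you only gesture at: on a non-projective K3 surface or $2$-torus, $\mathrm{NS}(X)$ carries a negative semi-definite intersection form, which combined with the discriminant bound coming from Theorem~\ref{thm:Bogomolov} and with $\gamma$-genericity of $\omega$ leaves only finitely many possible classes $c_1(F)$ of saturated subsheaves, yielding the slope bound needed for Proposition~\ref{prop:boundednessCriterion}. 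Finally, your argument in (1) is complete only for surfaces: in dimension $\ge 3$ the quotient $Q = E^{\vee\vee}/E$ has support of codimension $\ge 2$ but is \emph{not} of finite length, so ``bounded colength'' is meaningless, and bounding the family of such quotients with fixed Chern character is itself a nontrivial boundedness problem in the analytic category, where Quot/Douady spaces are not of finite type; one needs compactness of the components of the relative Douady space (a theorem of Fujiki requiring the K\"ahler hypothesis) together with an argument that only finitely many components occur. That step is precisely where the substance of the cited Corollary 5.5 lies, and it is missing from your sketch.
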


\begin{rmk}
In the non-K\"ahler case, boundedness of semistability cannot be expected in the above formulation as can be seen from the following example. Let $X$ be a class $VII$ surface. Then $b_1(X) = 1$ and the identity component $\Pic^0(X)$ of the Picard group pf $X$ is isomorphic to $\C^*$. In this case holomorphic line bundles with $c_1 = 0$ form an unbounded family of semistable sheaves on $X$, with respect to any polarization. 

A possible remedy to this situation would be to fix the first Chern class of the line bundles in the Bott-Chern cohomology of $X$, and not only in the singular cohomology. The effect would be to fix the degree of the considered line bundles. The set parametrizing line bundles of fixed degree in the above example is a circle in $\C^*$ centered at $0$ \cite[Section 1.3]{lubke95kobayashi}.
\end{rmk}

\section{Moduli functors and moduli stacks}

\subsection{Moduli functors of sheaves}

In this subsection we let $\cC$ be the category $(\Sch \!/k)$ of $k$-schemes in the algebraic geometric setting, and the category $(\An \!/\C)$ of (not necessarily separated) complex analytic spaces in the complex geometric setting.

\begin{defn}
Let $F: \cC \to (Sets)$ be a contravariant functor and $\phi: F \to \Hom(-,M)$ a natural transformation of functors where $M$ is an algebraic space over $k$, respectively an analytic space. We say that $\phi$ is
\begin{itemize}
    \item\textit{a categorical moduli space for $F$} if any other natural transformation $\psi: F \to \Hom(-,N)$ with $N$ an algebraic space over $k$, respectively an analytic space factorizes through $\phi$. One also says that $M$ \textit{corepresents} the functor $F$ in this case.
    \item \textit{a coarse moduli space for $F$} if it is a categorical moduli space and moreover induces a bijection at the level of $k$-points $F(\Spec k) \to \Hom(\Spec k,M)$, respectively of $\C$-points.
    \item \textit{a fine moduli space for $F$} if $\phi$ is an isomorphism of functors. 
\end{itemize}
\end{defn}

Let $(X,\omega)$ be a polarized space as in the algebraic, respectively K\"ahler setup. Given a class $\gamma$ in $\Knum(X)_\Q$, respectively in $H^*(X,\Q)$, let $\Coh_{X,\gamma} : \cC \to (Sets)$ denote the functor of coherent sheaves of class $\gamma$, which sends an object $S \in \cC$ to the set of isomorphism classes of flat families of coherent sheaves of class $\gamma$ on $X$ parameterized by $S$. In the sequel we will consider the following subfunctors of the functor $\Coh_{X,\gamma}$:
\begin{itemize}
    \item $\Coh^{ss}_{X,\omega,\gamma}$ of $\omega$-semistable sheaves,
    \item $\Coh^{s}_{X,\omega,\gamma}$ of $\omega$-stable sheaves,
    \item $\Coh^{lf,s}_{X,\omega,\gamma}$ of $\omega$-stable locally free sheaves,
    \item $\Coh^{spl}_{X,\gamma}$ of simple torsion-free sheaves \cite{AltmanKleiman,KosarewOkonek},
    \item $\Coh^{(SLF)}_{X,\omega,\gamma}$ of torsion-free sheaves with Seshadri locally free graduations \cite{PavelToma-reflexive,BuchdahlSchumacher22},
    \item $\Coh^{(SR)}_{X,\omega,\gamma}$ of torsion-free sheaves with Seshadri reflexive graduations \cite{PavelToma-reflexive}.
\end{itemize}
All these functors are Zariski-open subfunctors of $\Coh_{X,\gamma}$, see Proposition \ref{prop:Openness} and the above cited references. 

We will discuss the question whether these functors admit a categorical/coarse/fine moduli space after saying a few words about the corresponding moduli stacks.

\subsection{Moduli stacks of sheaves}

Since the theory of analytic stacks is less developed, in this subsection we place ourselves in the algebraic setup and recall some known facts about the stack of coherent sheaves. For a detailed account on stacks and algebraic stacks we refer the reader to \cite{LaumonChampsAlg,OlssonStacks,gomez01algebraic,AlperNotes,stacks-project}.

Consider the category $\cC oh_X$ whose objects are pairs $(S,E)$, where $S$ is a scheme over $k$ and $E$ is an $S$-flat family of sheaves on $X$. A morphism $(S',E') \to (S,E)$ in $\cC oh_X$ consists of a map $f: S' \to S$ of $k$-schemes together with a morphism $E \to f_* E'$ of sheaves whose adjoint is an isomorphism. We visualize this as a cartesian diagram
\[
    \xymatrix{ E' \ar[r] \ar[d] &  E \ar[d]\\  S \ar[r]^f &  S }
\]
This defines a category fibered in groupoids over the category of schemes over $k$.
 
\begin{prop}[{\cite[Tag 09DS, Tag 0DLY]{stacks-project}}] \label{prop:stackCoh}
The category $\cC oh_X$ is an algebraic stack locally of finite type and with affine diagonal over $k$.    
\end{prop}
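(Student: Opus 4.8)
The plan is to establish that $\cC oh_X$ is an algebraic stack by verifying the standard axioms, namely that it is a stack for the fppf (or étale) topology, that its diagonal is representable by algebraic spaces, and that it admits a smooth surjection from a scheme (equivalently, a smooth atlas). First I would check the stack axioms: given a family $(S,E)$, morphisms of such families and their compositions behave functorially, and descent for coherent sheaves along fppf covers of $S$ guarantees that objects and morphisms glue. This is where flatness over $S$ is essential, since it ensures that the restriction to fibers is well-behaved under base change, and that $\cC oh_X$ is indeed fibered in groupoids with the cartesian condition recorded in the excerpt. The descent statement reduces to faithfully flat descent for quasi-coherent sheaves together with the fact that flatness over the base is fppf-local, which is classical.

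Next I would analyze the diagonal $\Delta: \cC oh_X \to \cC oh_X \times_k \cC oh_X$. Representability of the diagonal amounts to showing that for any scheme $S$ and any two $S$-flat families $E_1, E_2$ on $S \times X$, the functor $\isom_S(E_1,E_2)$ on $S$-schemes is representable by an algebraic space (in fact by a scheme, indeed affine, which yields the affine diagonal claim). The standard approach is to observe that isomorphisms are an open subfunctor of homomorphisms, and that $\sheafHom$-type functors $T \mapsto \Hom_{T\times X}((E_1)_T,(E_2)_T)$ are representable by linear schemes over $S$ via the theory of the Hom-scheme, using properness of $X \to \Spec k$ and coherence to guarantee finiteness. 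The affineness of the diagonal follows because the relevant $\Hom$ and $\isom$ functors are represented by affine $S$-schemes.

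The final and technically central step is to produce a smooth atlas, which is where the main obstacle lies. The plan is to exhibit $\cC oh_X$ as a union of open substacks indexed by Hilbert polynomials, each of which is the quotient stack associated to a Quot scheme: after twisting by a sufficiently ample line bundle, any coherent sheaf with fixed Hilbert polynomial $P$ becomes globally generated with vanishing higher cohomology, so it arises as a quotient $\cO_X(-N)^{\oplus r} \twoheadrightarrow E$ for suitable $N$ and $r$, and the locally closed stratum is covered by an open subscheme of $\Quot_X(\cO_X(-N)^{\oplus r}, P)$. This Quot scheme is a scheme of finite type (by Grothendieck's construction), and the natural map from it to $\cC oh_X$, which forgets the surjection, is smooth and surjective onto the corresponding open substack because the fibers parametrize choices of bases of sections and are torsors under $\GL$-type groups. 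The delicate points are to check the smoothness (not merely flatness) of this presentation, which rests on the surjectivity of the cohomology comparison maps and the vanishing $H^{>0}(X,E(N))=0$ in the chosen range, and to verify that these strata are genuinely open so that their union recovers all of $\cC oh_X$ locally of finite type. Assembling the open substacks then shows $\cC oh_X$ is algebraic and locally of finite type, completing the proof.
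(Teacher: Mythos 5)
There is one genuine flaw in your argument, located in the affine-diagonal step. You deduce affineness of $\isom_S(E_1,E_2)$ from the facts that it is an \emph{open} subfunctor of the Hom-functor and that the latter is represented by an affine (linear) $S$-scheme. That inference is invalid: an open subscheme of an affine scheme need not be affine (e.g. $\A^2\setminus\{0\}$), so openness of the isomorphism locus buys you representability but not affineness. The correct argument --- and the one behind the cited Tag 0DLY --- is that $\isom_S(E_1,E_2)$ embeds as a \emph{closed} subscheme of $\Hom_S(E_1,E_2)\times_S \Hom_S(E_2,E_1)$, cut out by the conditions that the two composites equal the identities (equivalently, it is the equalizer of two morphisms into the affine $S$-schemes $\Hom_S(E_1,E_1)\times_S\Hom_S(E_2,E_2)$); a closed subscheme of a scheme affine over $S$ is affine over $S$. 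With this replacement, and keeping your representability of $\Hom_S(E_1,E_2)$ by the linear scheme $\Spec(\Sym\cH)$ of a coherent sheaf $\cH$ (which uses properness of $X$ and flatness of $E_2$ over $S$), the affine-diagonal claim goes through.

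Beyond that point your route is sound, but note that it differs from the proof the paper actually invokes. The paper gives no argument of its own; it cites the Stacks Project, where algebraicity of $\cC oh_X$ is established via Artin's criterion (Rim--Schlessinger conditions, effectivity of formal objects by Grothendieck existence, openness of versality), which applies to any proper, flat, finitely presented morphism with no projectivity hypothesis. You instead use the classical presentation by Quot schemes: decompose by Hilbert polynomial, twist by a large power of the ample bundle, and cover each piece by the open locus of $\Quot_X(\cO_X(-N)^{\oplus r},P)$ on which the quotient map induces an isomorphism on global sections; the forgetful map is then a $\GL_r$-torsor onto an open substack, hence a smooth atlas. This is legitimate in the paper's (AG) setup, where $X$ is smooth projective over $k$, and it is more concrete --- it exhibits $\cC oh_X$ locally as a quotient stack, which the Artin-axioms proof does not --- but it does not extend to proper non-projective $X$, which is exactly what the cited proof buys. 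One terminological slip: the pieces of your decomposition indexed by Hilbert polynomials are open \emph{and closed} substacks (the Hilbert polynomial is locally constant in flat families), and the loci within them defined by the regularity bound $N$ are open and exhaustive; calling them ``locally closed strata'' suggests a stratification that is not what is needed, though you do correctly flag that openness must be verified.
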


Given a numerical class $\gamma \in \Knum(X)$, we consider the open substacks $$\cC oh_{X,\omega,\gamma}^{ss}, \cC oh_{X,\omega,\gamma}^{s},\cC oh_{X,\omega,\gamma}^{lf,s},\cC oh_{X,\gamma}^{spl},\cC oh_{X,\omega,\gamma}^{(SLF)},\cC oh_{X,\omega,\gamma}^{(SR)}$$ 
of $\cC oh_X$ corresponding to the functors
$$\Coh_{X,\omega,\gamma}^{ss}, \Coh_{X,\omega,\gamma}^{s},\Coh_{X,\omega,\gamma}^{lf,s},\Coh_{X,\gamma}^{spl},\Coh_{X,\omega,\gamma}^{(SLF)},\Coh_{X,\omega,\gamma}^{(SR)}.$$ 
Theorem \ref{thm:BoundednessAG} implies that $\cC oh_{X,\omega,\gamma}^{ss}$ is quasi-compact, therefore so are $\Coh_{X,\omega,\gamma}^{s},\\ \Coh_{X,\omega,\gamma}^{lf,s},\Coh_{X,\omega,\gamma}^{(SLF)},\Coh_{X,\omega,\gamma}^{(SR)}$ too. Moreover Proposition \ref{prop:SReduction} yields that $\cC oh_{X,\omega,\gamma}^{ss}$ is universally closed over $k$. All together we obtain:

\begin{prop}
The substack $\cC oh_{X,\omega,\gamma}^{ss} \subset \cC oh_X$ is open and a universally closed algebraic stack of finite type and with affine diagonal over $k$.
\end{prop}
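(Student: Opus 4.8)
The plan is to deduce the asserted properties of $\cC oh_{X,\omega,\gamma}^{ss}$ from the general structure result for $\cC oh_X$ recorded in Proposition \ref{prop:stackCoh}, upgraded using the boundedness of Theorem \ref{thm:BoundednessAG} and the semistable reduction of Proposition \ref{prop:SReduction}. By Proposition \ref{prop:stackCoh}, $\cC oh_X$ is an algebraic stack locally of finite type with affine diagonal over $k$. Since openness of an immersion and affineness of a diagonal are inherited by open substacks, the only substantive points to establish are that $\cC oh_{X,\omega,\gamma}^{ss}$ is open in $\cC oh_X$, that it is of finite type (rather than merely locally of finite type), and that it is universally closed.

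First I would record openness: by Proposition \ref{prop:Openness}, slope-semistability is a Zariski-open condition in any $S$-flat family of coherent sheaves, so the locus of semistable sheaves of class $\gamma$ cuts out an open substack $\cC oh_{X,\omega,\gamma}^{ss}$ of the open substack of $\cC oh_X$ where the class equals $\gamma$, which is itself open since the Chern character is locally constant in flat families. An open substack of an algebraic stack is again an algebraic stack, and it inherits the affine diagonal from $\cC oh_X$ because the diagonal of an open substack is the base change of the ambient diagonal along the open immersion. For finite type, I would combine local finite type (inherited from $\cC oh_X$) with quasi-compactness: Theorem \ref{thm:BoundednessAG} asserts that the set of semistable sheaves of class $\gamma$ is bounded, which means it is the set of fibers of a single coherent sheaf over a finite-type $k$-scheme $S$; the induced morphism $S \to \cC oh_{X,\omega,\gamma}^{ss}$ is smooth and surjective, exhibiting a finite-type atlas and hence quasi-compactness. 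Locally of finite type plus quasi-compact yields finite type over $k$.

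The main obstacle, and the step I would carry out most carefully, is universal closedness, for which I would use the valuative criterion for universal closedness applied to $\cC oh_{X,\omega,\gamma}^{ss}$ (legitimate here since the stack is quasi-separated of finite type, so it suffices to test the criterion on discrete valuation rings). Concretely, given a DVR $R$ over $k$ with fraction field $K$ and an object over $\Spec K$, i.e.\ a $K$-flat family $E_K$ of $\omega$-semistable sheaves of class $\gamma$, I would need to produce, after a possible extension of $R$, an $R$-flat family extending $E_K$ whose special fibre is again $\omega$-semistable. This is precisely the content of Langton's semistable reduction, Proposition \ref{prop:SReduction}: one first extends $E_K$ to some $R$-flat family $E$ by a standard limit/flattening argument (using that $\cC oh_X$ is already universally closed in the appropriate sense, or directly by taking the schematic closure inside a suitable $\Quot$-scheme), and then Langton's theorem replaces $E$ by a subsheaf $F \subset E$ with $F_K \cong E_K$ and $F_k$ semistable. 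The constancy of $\gamma$ is preserved since $F$ and $E$ agree on the generic fibre and $R$-flatness forces equal Chern characters on both fibres.

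Finally I would assemble these observations: openness and affine diagonal from the ambient stack, finite type from boundedness, and universal closedness from Langton's reduction, to conclude that $\cC oh_{X,\omega,\gamma}^{ss}$ is an open, universally closed algebraic stack of finite type with affine diagonal over $k$. The one place demanding genuine care is verifying that the valuative criterion for universal closedness is correctly matched to the hypotheses of Proposition \ref{prop:SReduction}, in particular that the criterion need only be checked on DVRs and that the requisite extension to an $R$-flat family exists before Langton's argument is invoked; the rest is a formal transfer of properties along the open immersion $\cC oh_{X,\omega,\gamma}^{ss} \hookrightarrow \cC oh_X$.
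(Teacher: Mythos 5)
Your proof is correct and follows essentially the same route as the paper's (sketched) argument: openness of semistability from Proposition \ref{prop:Openness}, quasi-compactness (hence finite type) from the boundedness Theorem \ref{thm:BoundednessAG}, universal closedness from Langton's Proposition \ref{prop:SReduction} via the valuative criterion, and the remaining properties (algebraicity, local finite type, affine diagonal) inherited from $\cC oh_X$ via Proposition \ref{prop:stackCoh}. One minor caveat: the morphism $S \to \cC oh_{X,\omega,\gamma}^{ss}$ induced by a bounding family need not be smooth (a family is not automatically versal), but this is harmless, since a surjection from a quasi-compact scheme already forces the stack to be quasi-compact.
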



As in the case of moduli functors, one can define the notions of categorical/coarse/fine moduli spaces for algebraic stacks in the following way.
\begin{defn}
Let $\gX$ be an algebraic stack over $k$ and $\phi: \gX \to \Hom(-,M)$ a morphism of stacks, where $M$ is an algebraic space over $k$. We say that $\phi$ is
\begin{itemize}
    \item\textit{a categorical moduli space for $\gX$} if any other morphism $\psi: \gX \to \Hom(-,N)$ with $N$ an algebraic space over $k$ factorizes through $\phi$.
    \item \textit{a coarse moduli space for $\gX$} if it is a categorical moduli space and moreover induces a bijection between the set of isomorphism classes of $k$-points of $\gX$ and $\Hom(\Spec k,M)$.
    \item \textit{a fine moduli space for $\gX$} if $\phi$ is an isomorphism of stacks. 
\end{itemize}
\end{defn}

Note that the above moduli stacks never admit fine moduli spaces, since the automorphism groups of the objects are non-trivial. As to the existence of categorical or coarse moduli spaces for moduli stacks, this is equivalent to the existence of categorical or coarse moduli spaces for the corresponding moduli functors described above.

A more refined version of a categorical moduli space is the following.

\begin{defn}[\cite{alper2013good}]
A quasi-compact and quasi-separated morphism $\phi: \gX \to M$ from an algebraic stack $\gX$ to an algebraic space $M$ is said to be \textit{a good moduli space} if
\begin{itemize}
    \item the pushfoward functor on quasi-coherent sheaves is exact, and
    \item the induced morphism of sheaves $\cO_M \to \phi_* \cO_\gX$ is an isomorphism.
\end{itemize}
\end{defn}

Good moduli spaces are always categorical \cite[Theorem 6.6]{alper2013good}, but not coarse in general.

A natural question to be discussed next is that of the existence of a categorical/good/coarse moduli space for the above moduli stacks. 

\section{Moduli spaces of sheaves}

In algebraic and in complex geometry, the first moduli spaces of sheaves of particular interest were moduli spaces of line bundles and more generally of vector bundles. In the latter case it was soon observed that in order to obtain moduli spaces with good geometric properties (such as local-separatedness) one has to impose some restriction on the class of vector bundles to be classified. This led Mumford to introduce the slope-stability condition in \cite{mumford63}.

\begin{thm}\label{thm:stableCase}
The functor $\Coh_{X,\omega,\gamma}^{lf,s}$ admits a separated coarse moduli space $M_{X,\omega,\gamma}^{lf,s}$. In the (AG) case, $M_{X,\omega,\gamma}^{lf,s}$ is a quasi-projective scheme over $k$.
\end{thm}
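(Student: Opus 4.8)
The plan is to construct the moduli space for $\Coh_{X,\omega,\gamma}^{lf,s}$ by exhibiting the stack $\cC oh_{X,\omega,\gamma}^{lf,s}$ as a $\PGL$-quotient of an open subscheme of a Quot scheme, and then to form the geometric quotient. First I would exploit boundedness: by Theorem \ref{thm:BoundednessAG} (in the (AG) case) or Theorem \ref{thm:BoundednessKGI} together with the Kobayashi-Hitchin correspondence (in the (KG) case), the family of $\omega$-stable locally free sheaves of class $\gamma$ is bounded. Boundedness guarantees a single integer $m \gg 0$ so that every such $E$ has $E(m)$ globally generated with vanishing higher cohomology; fixing $N = P_\omega(E,m)$, one obtains surjections $\cO_X(-m)^{\oplus N} \twoheadrightarrow E$, hence a locally closed subscheme $R \subset \Quot(\cO_X(-m)^{\oplus N}, \gamma)$ parameterizing those quotients that are stable, locally free, and induce an isomorphism on global sections of the twist. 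By openness of stability (Proposition \ref{prop:Openness}) and of local freeness, $R$ is open in the relevant Quot scheme, and it carries a natural $\GL_N$-action whose orbits are exactly the isomorphism classes of pairs $(E, \text{basis of } H^0(E(m)))$.

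The second step is to pass to the quotient $M_{X,\omega,\gamma}^{lf,s} = R / \PGL_N$ (the scalars act trivially since they only rescale the framing). Here the crucial feature is that stable sheaves are simple (Definition \ref{defn:simple} and the remark following it: $\Hom(E,E)\cong k$), so the stabilizer of a point of $R$ under the $\PGL_N$-action is trivial; the action is free. One then verifies that the action is proper, so that a geometric quotient exists as a separated algebraic space. In the (AG) case I would invoke Geometric Invariant Theory: since the $\GL_N$-orbits in $R$ all lie in the GIT-stable locus of the ambient Quot scheme (a standard consequence of slope-stability via the Le Potier--Simpson estimates, or via the embedding of Quot into a Grassmannian linearized by an appropriate Grothendieck--Plücker line bundle), the quotient is represented by a quasi-projective scheme. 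In the analytic (KG) case, where GIT is unavailable, I would instead use that the free and proper action of $\PGL_N$ on the analytic space $R$ admits a quotient in the category of analytic spaces, yielding a separated complex analytic space.

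The separatedness of $M_{X,\omega,\gamma}^{lf,s}$ is the key geometric property and follows from the valuative criterion. Given a DVR $R$ with fraction field $K$ and two families over $\Spec R$ agreeing over $\Spec K$ with stable locally free fibers, one uses the uniqueness part of Langton's semistable reduction (Proposition \ref{prop:SReduction}) combined with simplicity: two stable sheaves of the same slope admitting a nonzero map must be isomorphic, so the two limiting special fibers coincide. That $M_{X,\omega,\gamma}^{lf,s}$ is a \emph{coarse} moduli space — inducing a bijection on $k$-points and corepresenting the functor — is then a formal consequence of the construction as an orbit space, since the $\GL_N$-orbits are in bijection with isomorphism classes of stable bundles and any natural transformation to another space is $\GL_N$-invariant on $R$ and hence factors through the quotient.

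The main obstacle I anticipate is establishing that the quotient morphism is well-behaved, namely that the $\PGL_N$-action on $R$ is proper (so that the orbit space is a separated algebraic space rather than merely an algebraic space) and, in the algebraic case, that the orbits lie in the GIT-(semi)stable locus with the correct linearization; reconciling the sheaf-theoretic slope-stability with the GIT-stability of the Quot scheme point requires the Le Potier--Simpson boundedness estimates and is the technically heaviest ingredient. In the (KG) analytic setting the obstacle shifts: one must produce the quotient without GIT, relying on the properness and freeness of the action to invoke an analytic quotient theorem, and here boundedness of semistability is not known unconditionally, so the result for $\Coh_{X,\omega,\gamma}^{lf,s}$ rests on the cases covered by Theorem \ref{thm:BoundednessKGI} or on bypassing global boundedness via the local structure of the analytic moduli problem.
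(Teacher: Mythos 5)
Your (AG) argument is essentially the construction the paper points to (Gieseker \cite{gieseker77}, Maruyama \cite{maruyama77}): boundedness (Theorem \ref{thm:BoundednessAG}) gives a uniform twist $m$, one forms the open locus $R$ in the Quot scheme, and the quasi-projective coarse moduli space is the quotient $R/\mathrm{PGL}_N$, with slope-stability feeding into GIT-stability via the implication slope-stable $\Rightarrow$ GM-stable (Remark \ref{rmk:ImplicationsStability}) and the Le Potier--Simpson estimates. One correction there: Proposition \ref{prop:SReduction} is purely an existence statement, so there is no ``uniqueness part of Langton'' to invoke; separatedness of the stable locus comes instead from the elementary fact that a nonzero map between $\omega$-stable locally free sheaves of the same class is an isomorphism, applied to an extension over the DVR of the given isomorphism over $K$.

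The genuine gap is the analytic half of the statement. The theorem is asserted in the paper's full setup, hence also in the (CG) Gauduchon case, and your construction cannot get started there: it needs boundedness of the set of $\omega$-stable locally free sheaves of class $\gamma$ to produce a single parameter space $R$, but $\textbf{B}_\gamma(\omega)$ is only known in the special (KG) cases of Theorem \ref{thm:BoundednessKGI} and fails in its naive formulation on non-K\"ahler surfaces; moreover there is no GIT in the analytic category, so even granting a parameter space one still owes an argument for the quotient. The paper's route avoids boundedness altogether: Norton \cite{Norton79Simple} constructed, by Banach-analytic methods, a coarse moduli space of \emph{simple} vector bundles on a compact complex manifold --- no polarization and no boundedness enter, at the price of the space being possibly non-separated and not of finite type. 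Since stable sheaves are simple, $\Coh^{lf,s}_{X,\omega,\gamma}$ is an open subfunctor of the simple one by openness of stability (Proposition \ref{prop:Openness}), so $M^{lf,s}_{X,\omega,\gamma}$ exists as an open subspace of Norton's moduli space, and stability then yields separatedness of that open part. This is precisely the mechanism your closing phrase about ``bypassing global boundedness via the local structure of the analytic moduli problem'' gestures at, but it is the missing idea, not a detail: without it your proof covers only those (KG) classes $\gamma$ for which boundedness happens to be known, not the statement as given. Note also that the resulting moduli space need not be of finite type, which is consistent with the theorem claiming only separatedness and not any compactness.
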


This result was proved in the algebraic geometrical context using Geometric Invariant Theory methods over curves by Mumford \cite{mumford63}, Seshadri \cite{Seshadri65} (see also \cite{NarasimhanSeshadri64}), over surfaces by Gieseker \cite{gieseker77} and in higher dimensions by Maruyama \cite{maruyama77}. In the analytic setup, the result is a consequence of the existence of a coarse moduli space of simple vector bundles, proved by Norton \cite{Norton79Simple} using Banach-analytic techniques, together with the openness of stability, Proposition \ref{prop:Openness}.

\begin{rmk}
In general the moduli spaces $M_{X,\omega,\gamma}^{lf,s}$ are rarely fine \cite{maruyama78moduli}. A situation when this is known to happen is when $X$ is a curve, $\omega$ is the fundamental class of $X$, and the rank and degree of the concerned vector bundles are coprime. 
\end{rmk}

In complex geometry, moduli spaces of stable vector bundles are related via the Kobayashi-Hitchin correspondence to moduli spaces of Hermite-Einstein connections.

\begin{thm}[Moduli-theoretical Kobayashi-Hitchin correspondence, \cite{Donaldson87,kobayashi2014differential,LubkeOkonek,Miyajima89,lubke95kobayashi}]\label{thm:KHmoduli}
Let $(X,\omega)$ be a Gauduchon compact complex manifold. Let $E$ be a smooth complex vector bundle on $X$ and $h$ a Hermitian metric on $E$. Then the set-theoretical Kobayashi-Hitchin correspondence yields a real-analytic isomorphism
\[
    M^{HE}_{X,\omega,E,h} \to M^{lf,s}_{X,\omega,E}
\]
between the moduli space of $\omega$-Hermite-Einstein connections on $(E,h)$ and the moduli space of $\omega$-stable holomorphic structures on $E$.
\end{thm}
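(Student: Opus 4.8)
The plan is to construct the comparison map directly from the set-theoretical correspondence of Theorem \ref{thm:KH}, establish that it is bijective, and then upgrade it to a real-analytic isomorphism by matching the local deformation theories of the two moduli problems. First I would define the map on points. Given an irreducible $\omega$-Hermite-Einstein connection $A$ on $(E,h)$, the integrability condition $F_A^{0,2}=0$ together with the Newlander-Nirenberg theorem produces the holomorphic structure $\cE_A=(E,\overline{\partial}_A)$, which by the first half of Theorem \ref{thm:KH} is $\omega$-stable. If $A'=u\cdot A$ for a unitary gauge transformation $u$, then $\overline{\partial}_{A'}=u\circ\overline{\partial}_A\circ u^{-1}$, so $\cE_{A'}\cong\cE_A$; hence $[A]\mapsto[\cE_A]$ descends to a well-defined map
\[
    \Phi\colon M^{HE}_{X,\omega,E,h}\longrightarrow M^{lf,s}_{X,\omega,E}.
\]

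For bijectivity I would argue as follows. Surjectivity is the content of the converse half of Theorem \ref{thm:KH} together with its subsequent remark: an $\omega$-stable structure $\cE$ carries an $\omega$-Hermite-Einstein metric $h'$, and writing $h'=h\cdot g$ for a positive $h$-self-adjoint automorphism $g$ and applying the complex gauge transformation $g^{1/2}$ transports the Chern connection of $(\cE,h')$ to an $h$-unitary Hermite-Einstein connection $A$ on the fixed bundle $(E,h)$ with $\cE_A\cong\cE$. For injectivity, suppose $\cE_{A_1}\cong\cE_{A_2}$ via a holomorphic isomorphism $\phi$. Pulling back the metric $h$ on $\cE_{A_2}$ along $\phi$ yields a second Hermite-Einstein metric on $\cE_{A_1}$, so the uniqueness of the Hermite-Einstein metric up to a positive scalar (again Theorem \ref{thm:KH}) forces $\phi$, after rescaling, to be unitary; hence $A_1$ and $A_2$ are related by a unitary gauge transformation and $[A_1]=[A_2]$.

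It remains to compare the analytic structures. On the holomorphic side, $M^{lf,s}_{X,\omega,E}$ carries its coarse moduli structure from Theorem \ref{thm:stableCase}; locally around $[\cE]$ it is cut out inside $H^1(X,\sheafEnd(\cE))$ by a Kuranishi obstruction map valued in $H^2(X,\sheafEnd(\cE))$, the deformations being governed by the Dolbeault complex of $\sheafEnd(\cE)$. On the gauge-theoretic side, $M^{HE}_{X,\omega,E,h}$ is built via the Coulomb slice theorem for the $h$-unitary gauge group acting on the space of integrable Hermite-Einstein connections; near an irreducible $A$ it is a Kuranishi space modelled on the elliptic deformation complex of the Hermite-Einstein equation. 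The key linear fact is that Hodge theory for the Chern connection identifies this real elliptic complex with the underlying real complex of the Dolbeault complex of $\sheafEnd(\cE_A)$, matching harmonic representatives of the $H^1$'s as real tangent spaces and the two obstruction maps.

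The hard part will be promoting this linear identification to a genuine real-analytic isomorphism of Kuranishi models. This rests on showing that the complex gauge transformation relating the unitary slice to the holomorphic slice depends real-analytically on the parameters, which in turn follows from the real-analytic dependence of the Hermite-Einstein metric on the holomorphic structure — obtained by applying the implicit function theorem to the elliptic, real-analytic Hermite-Einstein operator — and from verifying that the two Kuranishi obstruction maps correspond under the identification. Once the local models are matched real-analytically, $\Phi$ and its inverse are real-analytic near every point, so $\Phi$ is a real-analytic isomorphism. The full argument in the Gauduchon setting is carried out in \cite{lubke95kobayashi}.
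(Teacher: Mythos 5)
Your proposal is correct, and it follows essentially the same route as the paper, which offers no proof of Theorem \ref{thm:KHmoduli} at all but delegates it to \cite{Donaldson87,kobayashi2014differential,LubkeOkonek,Miyajima89,lubke95kobayashi}, where the argument is precisely the one you outline. Your point-set part is complete (well-definedness under unitary gauge, surjectivity via the existence half of Theorem \ref{thm:KH} together with the complex gauge transformation $g^{1/2}$, injectivity via uniqueness of the Hermite--Einstein metric on a stable, hence simple, bundle), and you correctly isolate the genuinely hard analytic step --- the real-analytic matching of the gauge-theoretic slice model with the Kuranishi model for holomorphic structures --- which is exactly where the cited references do their work.
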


In general the moduli spaces $M_{X,\omega,\gamma}^{lf,s}$ are not compact. If one aims at constructing natural compactifications, one generally needs to relax both the locally-freeness and the stability condition of the coherent sheaves to be parametrized. 

We have already mentioned moduli spaces of simple vector bundles. These extend to (not necessarily separated) moduli spaces of simple torsion-free sheaves.

\begin{thm}[\cite{AltmanKleiman,KosarewOkonek}]
The functor $\Coh_{X,\gamma}^{spl}$ admits a coarse moduli space $M_{X,\gamma}^{spl}$.
\end{thm}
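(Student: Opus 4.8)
The plan is to realise $\Coh_{X,\gamma}^{spl}$ as a quotient, exploiting that simple sheaves have the smallest possible automorphism group, namely the scalars $\G_m$. First I would record that the simple condition is open: by upper semicontinuity of $s \mapsto \dim \Hom(\cE|_{\{s\}\times X},\cE|_{\{s\}\times X})$ in a flat family, the locus where this dimension equals $1$ is open, so $\Coh_{X,\gamma}^{spl}$ is an open subfunctor of $\Coh_{X,\gamma}$ and $\cC oh_{X,\gamma}^{spl}$ an open substack of the algebraic stack $\cC oh_X$ of Proposition \ref{prop:stackCoh}.

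The key structural observation is that the automorphism group of every object of $\cC oh_{X,\gamma}^{spl}$, also relatively over a base $S$, is exactly $\G_m$ (this uses cohomology and base change, the fibrewise $\Hom$ being constant of dimension one). Hence the inertia stack is $\G_m \times \cC oh_{X,\gamma}^{spl}$, and after rigidifying away these central scalars the resulting stack has trivial automorphism groups. An algebraic stack with trivial inertia is an algebraic space; this algebraic space $M_{X,\gamma}^{spl}$ is the candidate coarse moduli space.

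To produce $M_{X,\gamma}^{spl}$ concretely I would cover $\cC oh_{X,\gamma}^{spl}$ by bounded pieces. For a large twist $m$, the sheaves $E$ with $E(m)$ globally generated and $H^{>0}(E(m)) = 0$ are parametrized by an open subscheme $Q_m^{spl}$ of a Quot scheme $\Quot(\cO_X(-m)^{\oplus N},\gamma)$ on which $\mathrm{PGL}_N$ acts with orbits equal to isomorphism classes. Since the sheaves are simple, the stabilisers in $\mathrm{PGL}_N$ are trivial, so the action is free; the quotient $[Q_m^{spl}/\mathrm{PGL}_N]$ then has trivial inertia and is an algebraic space $M_m$, a coarse moduli space for the corresponding bounded subfunctor. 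Enlarging $m$ gives open immersions $M_m \hookrightarrow M_{m'}$, and gluing yields $M_{X,\gamma}^{spl}$, locally of finite type over $k$. The space is coarse rather than fine precisely because $\cC oh_{X,\gamma}^{spl}$ is a nontrivial $\G_m$-gerbe over it: every family induces a morphism to $M_{X,\gamma}^{spl}$, bijective on points, but not every morphism lifts to a universal family, the obstruction being a Brauer class.

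The main obstacle is showing that the free-action quotient is genuinely an algebraic space, i.e. that the flat equivalence relation $\mathrm{PGL}_N \times Q_m^{spl} \to Q_m^{spl} \times Q_m^{spl}$ is effective, and then that the glued object corepresents the full functor. In the analytic setup (Kosarew-Okonek), Quot schemes and these quotient theorems are unavailable, so the construction proceeds locally through Banach-analytic Kuranishi families representing the deformation functor governed by $\Ext^1(E,E)$ with obstructions in $\Ext^2(E,E)$; simplicity forces the scalar $\G_m$ to act trivially on each Kuranishi slice, and the local charts glue to the analytic space $M_{X,\gamma}^{spl}$. Verifying the cocycle compatibility of these local models, and that the scalar automorphisms obstruct fineness while still permitting a coarse space, is the technical heart in both settings.
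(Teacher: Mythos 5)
Your proposal is correct and follows essentially the same route as the sources the paper cites for this theorem: Altman--Kleiman's algebraic construction via rigidified simple sheaves parametrized by open subschemes of Quot schemes with a free $\mathrm{PGL}_N$-action, whose quotient is an algebraic space obtained by gluing bounded pieces, and Kosarew--Okonek's Banach-analytic gluing of Kuranishi families in the complex-analytic case. The survey itself gives no proof beyond these citations, and your sketch---including the explanation that the scalar automorphisms make $\cC oh_{X,\gamma}^{spl}$ a $\G_m$-gerbe over $M_{X,\gamma}^{spl}$, which is exactly why the moduli space is coarse but not fine---is faithful to both references.
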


\begin{cor}
 The functor $\Coh_{X,\omega,\gamma}^{s}$ admits a separated coarse moduli space $M_{X,\omega,\gamma}^{s}$ as an open subset in $M_{X,\gamma}^{spl}$.  
\end{cor}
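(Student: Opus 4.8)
The plan is to obtain $M^{s}_{X,\omega,\gamma}$ as the \emph{open} stable locus inside the already constructed moduli space $M^{spl}_{X,\gamma}$ of simple sheaves, and then to establish separatedness separately, since $M^{spl}_{X,\gamma}$ is in general non-separated.

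First I would note that an $\omega$-stable sheaf is torsion-free and simple (it is simple because stable sheaves are simple, cf. Definition \ref{defn:simple}), so that $\Coh^{s}_{X,\omega,\gamma}$ is a subfunctor of $\Coh^{spl}_{X,\gamma}$. By the openness of stability, Proposition \ref{prop:Openness}, for every flat family $\cE$ of simple sheaves of class $\gamma$ over a base $S$ the $\omega$-stable locus $S^{s}\subset S$ is open; hence $\Coh^{s}_{X,\omega,\gamma}\hookrightarrow\Coh^{spl}_{X,\gamma}$ is an open subfunctor.

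Next I would descend this openness to the coarse space. Define $M^{s}_{X,\omega,\gamma}\subseteq M^{spl}_{X,\gamma}$ as the set of points parameterizing $\omega$-stable sheaves; this is well posed because corepresentability identifies points of $M^{spl}_{X,\gamma}$ with isomorphism classes of simple sheaves, and stability is invariant under isomorphism. To see that this subset is open I would use a local universal family: the construction of $M^{spl}_{X,\gamma}$ furnishes such a family \'etale-locally in the (AG) case and analytically locally (via the Douady/Kuranishi deformations, which are universal precisely because the sheaves are simple) in the analytic case. Pulling back the stability locus of the preceding paragraph along these charts shows that the stable locus is exactly the preimage of $M^{s}_{X,\omega,\gamma}$, and since the charts cover $M^{spl}_{X,\gamma}$ by open (resp.\ smooth) maps, $M^{s}_{X,\omega,\gamma}$ is open. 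The open-subfunctor formalism then gives that the restriction of $\phi$ corepresents $\Coh^{s}_{X,\omega,\gamma}$ and is bijective on points, i.e.\ $M^{s}_{X,\omega,\gamma}$ is a coarse moduli space.

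The main obstacle is separatedness, which I would prove by the valuative criterion and which does not come for free from $M^{spl}_{X,\gamma}$. Let $R$ be a DVR with uniformizer $t$ and fraction field $K$ (in the analytic setting one argues over a disk), and let $E,E'$ be two $R$-flat families of $\omega$-stable sheaves of class $\gamma$ with $E_{K}\cong E'_{K}$; I must show $E_{k}\cong E'_{k}$. Regarding $E$ and $E'$ as two lattices in $V:=E_{K}\cong E'_{K}$ and scaling a fixed generic isomorphism minimally in each direction, I obtain homomorphisms $f\colon E\to E'$ and $g\colon E'\to E$ that are nonzero on the special fibres and satisfy $g\circ f=t^{a+b}\,\id$ on $V$. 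Integrality forces $a+b\ge 0$; and $a+b>0$ is impossible, because then $g_{k}\circ f_{k}=0$, whereas slope-stability of the fibres forces $\im f_{k}$ to have full rank (a nonzero map between $\omega$-stable sheaves of equal rank and slope cannot drop rank), so $\ker g_{k}$ would be everything and contradict $g_{k}\ne 0$ on the torsion-free $E_{k}$. Hence $a+b=0$, $(g\circ f)_{k}=\id$, so $f_{k}$ is a split injection; indecomposability of the simple sheaf $E'_{k}$ then forces $f_{k}$ to be an isomorphism. This verifies separatedness. The delicate points I expect to watch are the lattice bookkeeping yielding $a+b=0$ and, in the analytic category, replacing DVRs by disks in the separatedness criterion.
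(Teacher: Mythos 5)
Your proposal is correct and follows essentially the same route the paper (implicitly) takes: stable sheaves are simple and torsion-free, so by openness of stability (Proposition \ref{prop:Openness}) the stable locus is an open subset of $M^{spl}_{X,\gamma}$, which inherits the coarse moduli property from the Altman--Kleiman/Kosarew--Okonek construction. Your valuative-criterion argument for separatedness is the standard one (a nonzero map between $\omega$-stable sheaves of the same rank and slope is injective, and simplicity rules out a nontrivial splitting), which is exactly the classical detail the paper leaves unstated.
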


We note that in the algebraic setup one can further show that the moduli space $M_{X,\omega,\gamma}^{s}$ is quasi-projective over $k$ \cite{maruyama77}.

When $X$ is a curve, the functor $\Coh_{X,\omega,\gamma}^{ss}$ admits a projective categorical moduli space $M_{X,\omega,\gamma}^{ss}$, which contains $M_{X,\omega,\gamma}^{s}$ as an open subscheme \cite{Seshadri65}. The geometric points of $M_{X,\omega,\gamma}^{ss}$ correspond to isomorphism classes of Seshadri graduations of semistable sheaves, and therefore $M_{X,\omega,\gamma}^{ss}$ is not a coarse moduli space in general.

When trying to employ Geometric Invariant Theory to construct compactifications of $M_{X,\omega,\gamma}^{s}$ in higher dimensions, one is led to consider the Gieseker-Maruyama-semistability condition, see Definition \ref{defn:GM}.

\begin{thm}[\cite{gieseker77,maruyama77,simpson1994moduli,alper2013good}]
In the algebraic setup and in zero characteristic, the substack $\cC oh_{X,\omega,\gamma}^{GMss} \subset \cC oh_{X,\gamma}$ of Gieseker-Maruyama-semistable sheaves is open and admits a projective good moduli space $M_{X,\omega,\gamma}^{GMss}$.  
\end{thm}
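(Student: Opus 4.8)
The plan is to follow the classical Geometric Invariant Theory construction of Gieseker, Maruyama and Simpson, and then to recognize the resulting GIT quotient as a good moduli space in the sense of Alper. Openness of the substack $\cC oh^{GMss}_{X,\omega,\gamma} \subset \cC oh_X$ is the Gieseker--Maruyama analogue of Proposition \ref{prop:Openness}, so the real content is the existence and projectivity of the good moduli space. First I would rigidify the problem using boundedness. By Theorem \ref{thm:BoundednessAG}, and Remark \ref{rmk:ImplicationsStability}, the family of GM-semistable sheaves of class $\gamma$ is bounded; hence there is an integer $m \gg 0$ such that every such sheaf $E$ is $m$-regular. For this $m$ the twist $E(m)$ is globally generated with vanishing higher cohomology and $h^0(E(m)) = P(m) =: N$, where $P$ is the Hilbert polynomial determined by $\gamma$. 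Choosing an isomorphism $H^0(E(m)) \cong k^N$ presents $E$ as a quotient $\cO_X(-m)^{\oplus N} \twoheadrightarrow E$, so the GM-semistable sheaves are parametrized by a locally closed, $\GL_N$-invariant subscheme $R$ of the Quot scheme $\Quot(\cO_X(-m)^{\oplus N}, P)$, with $G := \GL_N$ acting by change of basis and the orbits corresponding exactly to isomorphism classes. This identifies the stack with a quotient stack, $\cC oh^{GMss}_{X,\omega,\gamma} \cong [R/G]$.

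Next I would run GIT on the Quot scheme. Using Grothendieck's embedding, for a further large integer $m'$ one embeds the Quot scheme $G$-equivariantly into a Grassmannian of quotients of $H^0(\cO_X(m'-m))^{\oplus N}$, equipped with its natural Pl\"ucker linearization, and restricts to the induced $\SL_N$-action. The heart of the argument is Simpson's comparison theorem \cite{simpson1994moduli}: for $m, m'$ chosen large enough—legitimately, since boundedness makes the thresholds depend only on $\gamma$—a point of $R$ is GIT-(semi)stable for this linearization if and only if the corresponding sheaf is GM-(semi)stable in the sense of Definition \ref{defn:GM}.

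This step is the main obstacle. It amounts to translating one-parameter subgroups of $G$ into weighted filtrations of the sheaf and matching the Hilbert--Mumford weight against the Hilbert-polynomial inequalities defining GM-semistability. Forcing the two notions to coincide, rather than merely imply one another, is what requires the delicate asymptotic estimates on $m$ and $m'$ that occupy the technical core of \cite{gieseker77,maruyama77,simpson1994moduli}; one must also ensure that the GIT-semistable locus in the closure $\overline{R} \subset \Quot$ is precisely $R^{ss}$, so that boundary points do not contribute spurious objects.

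Granting the comparison, the GIT good quotient $R^{ss}/\!\!/G$ exists as a projective scheme, being $\Proj$ of a ring of invariants on a projective ambient space under the reductive group $G$. By Alper's theory \cite{alper2013good} the quotient morphism $[R^{ss}/G] \to R^{ss}/\!\!/G$ is a good moduli space; combined with the stack identification above, this exhibits $M^{GMss}_{X,\omega,\gamma} := R^{ss}/\!\!/G$ as a projective good moduli space for $\cC oh^{GMss}_{X,\omega,\gamma}$. Properness may alternatively be verified intrinsically: Langton's semistable reduction (Proposition \ref{prop:SReduction}) supplies the valuative criterion ensuring the good moduli space is proper, which together with the projective embedding furnished by GIT yields projectivity.
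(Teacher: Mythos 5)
Your proposal follows exactly the route the paper points to via its citations: the Gieseker--Maruyama--Simpson GIT construction on the Quot scheme (boundedness and $m$-regularity, the identification $\cC oh^{GMss}_{X,\omega,\gamma} \cong [R/\GL_N]$, Simpson's matching of GIT-semistability with GM-semistability), followed by Alper's recognition of the GIT quotient $[R^{ss}/G] \to R^{ss}/\!\!/G$ as a good moduli space, which is legitimate in characteristic zero since $\GL_N$ is then linearly reductive. This is correct and is essentially the same argument the paper relies on, including the remark that positive characteristic would only yield an adequate moduli space.
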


\begin{rmk}
In positive characteristic, the above statement holds if one replaces ``good moduli'' by ``adequate moduli'', see Alper \cite{AlperAdequate}.
\end{rmk}
\begin{rmk}
In the general (KG) setup, the existence of the Gieseker-Maruyama moduli space is generally unknown. There exist partial results when $X$ is complex projective and $\omega$ is a non-ample K\"ahler class \cite{greb2020moduliKahler}.
\end{rmk}

A different way to enlarge the open substack $\cC oh_{X,\omega,\gamma}^{lf,s}$ is to look at $\cC oh_{X,\omega,\gamma}^{(SLF)}$ and $\cC oh_{X,\omega,\gamma}^{(SR)}$. For these one still gets good moduli spaces in the (AG) setting and characteristic zero \cite{PavelToma-reflexive}. See also \cite{BuchdahlSchumacher22} for the complex analytic case.

\section{Further topics}\label{sect:further}

\subsection{Donaldson-Uhlenbeck compactification}

In this subsection we will consider the case when $(X,\omega)$ is a polarized smooth complex projective surface. Let $E$ be a smooth complex vector bundle on $X$ and $h$ a Hermitian metric on $E$. Recall that by the moduli-theoretical Kobayashi-Hitchin correspondence there is a real-analytic isomorphism
\[
    M^{HE}_{X,\omega,E,h} \to M^{lf,s}_{X,\omega,E}
\]
between the moduli space of $\omega$-Hermite-Einstein connections on $(E,h)$ and the moduli space of $\omega$-stable holomorphic structures on $E$. 

It is important in Donaldson theory to work with suitable compactifications of moduli spaces of anti-self-dual connections. These were constructed by Donaldson based on compactness results due to Uhlenbeck \cite{donaldson90geometry} and lead also to a compactification $M^{DU}_{{X,\omega,E,h}}$ of $M^{HE}_{X,\omega,E,h}$, which we call the Donaldson-Uhlenbeck compactification.

On the algebraic geometrical side, we have already seen a compactification of the moduli space $M^{lf,s}_{X,\omega,E}$ of slope-stable vector bundles by adding Gieseker-Maruyama-semistable torsion-free sheaves at the boundary, which is the Gieseker-Maruyama moduli space $M^{GMss}_{X,\omega,\ch(E)}$. Le Potier \cite{le1992fibre} and Jun Li \cite{li1993algebraic} constructed a projective morphism
\[
    \varphi: M^{GMss}_{X,\omega,\ch(E)} \to \P_\C^N
\]
which is an immersion on $M^{lf,s}_{X,\omega,E}$.
Furthermore, Li proved that the closure of the image $\varphi(M^{lf,s}_{X,\omega,E})$ inside $\P_\C^N$ is homeomorphic to the Donaldson-Uhlenbeck compactification $M^{DU}_{{X,\omega,E,h}}$. This extends the inverse of the Kobayashi-Hitchin correspondence as a homeomorphism of compact spaces 
\[
    \overline{\varphi(M^{lf,s}_{X,\omega,E})} \to M^{DU}_{{X,\omega,E,h}}.
\]
In particular one can transfer the complex algebraic structure of $\overline{\varphi(M^{lf,s}_{X,\omega,E})}$  to the Donaldson-Uhlenbeck compactification.

As a further compactification of $M^{lf,s}_{X,\omega,\gamma}$, Huybrechts and Lehn constructed in \cite[Chapter 8]{HuybrechtsLehn} a complex projective moduli space $M^{\mu ss}_{X,\omega,\gamma}$ of slope-semistable sheaves over a smooth surface, which comes together with a natural transformation of functors $$\Coh_{X,\omega,\gamma}^{ss} \to \Hom(-,M^{\mu ss}_{X,\omega,\gamma}).$$
However, $M^{\mu ss}_{X,\omega,\gamma}$ does not corepresent the moduli functor in general. 

Similar results were obtained by Greb, Sibley, Toma, Wentworth \cite{greb2021HYM} in dimension larger than two using the analogue of the Donaldson-Uhlenbeck compactification due to Tian \cite{tian2000gauge} and the higher dimensional analogue of the Huybrechts-Lehn moduli space $M^{\mu ss}_{X,\omega,\gamma}$.

\subsection{Moduli of pure sheaves}

Until now we have only considered classification problems of torsion-free sheaves. It is however natural to extend this research to lower-dimensional coherent sheaves. The analogue of the torsion-free condition in this situation is the \textit{purity} condition.

\begin{defn}
A coherent sheaf $E$ on $X$ is said to be \textit{pure of dimension $d$} if any non-trivial coherent subsheaf $F \subset E$ has dimension $d$ too.
\end{defn}

One defines the notions of $\omega$-(semi)stability and $GM$-(semi)stability also for pure sheaves in the (AG) and (KG) setups. For this one writes the Hilbert polynomial of a coherent sheaf $E$ of dimension $d$ on $X$ in the following form
\[
    P_\omega(E,m) = \sum_{i=0}^d \alpha_i(E) m^i,\quad   \text{where }\alpha_i(E) = \frac{1}{i!}\int_X \ch(E)\omega^i \Todd_X.
\]

\begin{defn}
A coherent sheaf $E$ of dimension $d$ on $(X,\omega)$ is said to be $\omega$\textit{-(semi)stable} if
\begin{enumerate}
    \item $E$ is pure, 
    \item for any coherent subsheaf $F \subset E$ with $0 < \alpha_d(F) < \alpha_d(E)$ we have
    \[
        \frac{\alpha_{d-1}(F)}{\alpha_d(F)} \leqp \frac{\alpha_{d-1}(E)}{\alpha_d(E)}.
    \]
\end{enumerate}
\end{defn}
\begin{defn}
A coherent sheaf $E$ of dimension $d$ on $(X,\omega)$ is said to be $GM$\textit{-(semi)stable} if
\begin{enumerate}
    \item $E$ is pure, 
    \item for any coherent subsheaf $F \subset E$ with $0 < \alpha_d(F) < \alpha_d(E)$ we have
    \[
        \frac{P_\omega(F,m)}{\alpha_d(F)} \leqp \frac{P_\omega(E,m)}{\alpha_d(E)} \quad \text{for }m\gg 0.
    \]
\end{enumerate}    
\end{defn}

One can also consider the following notion of semistability that interpolates between slope-semistability and GM-semistability.
\begin{defn}
For integers $1 \le \ell \le d \le n$, a coherent sheaf $E$ of dimension $d$ on $(X,\omega)$ is said to be $\ell$\textit{-(semi)stable} if
\begin{enumerate}
    \item $E$ is pure, 
    \item for any coherent subsheaf $F \subset E$ with $0 < \alpha_d(F) < \alpha_d(E)$ we have
    \[
        \frac{\sum_{i=d-\ell}^d \alpha_i(F) m^i}{\alpha_d(F)} \leqp \frac{\sum_{i=d-\ell}^d \alpha_i(E) m^i}{\alpha_d(E)} \quad \text{for }m\gg 0.
    \]
\end{enumerate}    
\end{defn}

For any class $\gamma$ and $\ell$ between $1$ and $\dim(\gamma)$, we obtain a chain of corresponding open subfunctors of $\ell$-semistable sheaves
\[
    \Coh_{X,\omega,\gamma}^{GMss} \subset \Coh_{X,\omega,\gamma}^{\ell ss} \subset \Coh_{X,\omega,\gamma}^{ss} \subset \Coh_{X,\gamma}.
\]

The existence of moduli spaces of pure sheaves has been established in various situations:
\begin{itemize}
    \item The case of simple pure sheaves is settled in \cite{AltmanKleiman} and \cite{KosarewOkonek}.
    \item In the characteristic zero (AG) context, Simpson \cite{simpson1994moduli} proved the existence of a projective categorical moduli space for $\Coh_{X,\omega,\gamma}^{GMss}$ using techniques of Geometric Invariant Theory; the positive characteristic case is to be found in \cite{maruyama2016moduli}.
    \item Analogues of the Huybrechts-Lehn moduli spaces for $\ell$-semistable sheaves were constructed in \cite{pavel2022thesis}.
\end{itemize}

\subsection{Change of semistability and wall-crossing}
In this subsection we restrict the discussion to the algebraic geometric setting. When the dimension of $X$ is larger than one, the moduli spaces of semistable sheaves above depend on the choice of the polarization $\omega$ in the ample cone $\Amp^1(X)$. The situation which seems to appear in general is that for any numerical class $\gamma \in \Knum(X)$ there exists a locally finite set $\gW$ of real algebraic hypersurfaces $\gw \subset \Amp^1(X)$, called \textit{walls}, leading to a decomposition into connected components, called \textit{chambers}, of
\[
    \Amp^1(X) \setminus \bigcup_{\gw \in \gW} \gw
\]
accounting for the change of (semi)stability in the following sense. If $\omega_1, \omega_2$ are ample classes in the same chamber, then a coherent sheaf $E$ of class $\gamma$ is $\omega_1$-(semi)stable if and only if $E$ is $\omega_2$-(semi)stable. If this is indeed the case, then moduli spaces with respect to $\omega_1$ and $\omega_2$ coincide. The next step to understand the variation of the moduli spaces depending on the polarization would be to study wall-crossing, i.e. the relation between moduli spaces corresponding to adjacent chambers.

The existence of a chamber structure as above is guaranteed once the following stronger boundedness property for semistable sheaves is established. 

\begin{defn}[Uniform boundedness of semistable sheaves]
Given a numerical class $\gamma \in \Knum(X)$, we say that the \textit{uniform boundedness of semistability} holds for $\gamma$ if for any compact subset $K \subset \Amp^1(X)_\R$, the set of isomorphism classes of coherent sheaves of type $\gamma$ on $X$ that are $\omega$-semistable with respect to some $\omega \in K$ is bounded.
\end{defn}

In the surface case uniform boundedness is established using the Bogomolov inequality and the Hodge Index Theorem \cite{qin93equivalence,gottsche96change,FriedmanQin95,ellingsrud1995variation,matsuki97mumford}. In this case the resulting walls are moreover rational linear, a situation which is propitious to the study of wall-crossing phenomena, see the above references for such studies.

Another situation where a rational linear chamber structure exists is the case of two-dimensional pure sheaves \cite{pavel2024uniformboundednesssemistablepure}. 

In higher dimensions a wall and chamber structure exists \cite{greb2017compact}, however walls are neither linear nor rational in general \cite{qin93equivalence,schmitt00walls}. This problem was circumvented in \cite{greb16variation,greb16moduli} by introducing a more refined notion of semistability for which wall-crossing is well-behaved.



\bibliography{mainbib.bib}
\bibliographystyle{amsalpha}

\medskip
\medskip
\begin{center}
\rule{0.4\textwidth}{0.4pt}
\end{center}
\medskip
\medskip

\end{document}